\documentclass[reqno,11pt]{amsart}
\usepackage{color}
\usepackage{mathtools}
\usepackage{amsfonts}
\usepackage{amssymb}
\usepackage{amsthm}
\usepackage{newlfont}
\usepackage{graphicx}
\usepackage{float}
\usepackage{amsmath}
\usepackage{multirow}


\usepackage{geometry}
 \geometry{
 a4paper,
 total={162mm,230mm},
 left=20mm,
 top=25mm,
 }

\usepackage{tikz}
\usetikzlibrary{matrix,shapes,arrows,positioning,chains}

\numberwithin{equation}{section}
\newtheorem{thm}{Theorem}[section]
\newtheorem{lem}{Lemma}[section]
\newtheorem{rem}{Remark}[section]


\newcommand{\eps}{\varepsilon}

\newcommand{\artanh}{\operatorname {artanh}}

\newcommand{\ed}{\end {document}}


\begin{document}
\sloppy

\title[Stability and Convergence of Strang splitting method]{Stability and convergence of Strang splitting. Part I: Scalar Allen-Cahn equation}

\author[D. Li]{ Dong Li}
\address{D. Li, SUSTech International Center for Mathematics, and Department of Mathematics,  Southern University of Science and Technology,
	Shenzhen, China }
\email{lid@sustech.edu.cn}

\author[C.Y. Quan]{Chaoyu Quan}	
\address{C.Y. Quan, SUSTech International Center for Mathematics,  Southern University of Science and Technology,
	Shenzhen, China }
\email{quancy@sustech.edu.cn}

\author[J. Xu]{Jiao Xu}	
\address{J. Xu, SUSTech International Center for Mathematics,  Southern University of Science and Technology,
	Shenzhen, China }
\email{xuj7@sustech.edu.cn}


\maketitle

\begin{abstract}
We consider a class of second-order Strang splitting methods for Allen-Cahn equations with polynomial
or logarithmic nonlinearities. For the polynomial case both the linear and the nonlinear propagators
are computed explicitly.  We show that this type of Strang splitting scheme is
unconditionally stable regardless of the time step. Moreover we establish strict energy dissipation
for a judiciously modified energy which coincides with the classical energy up to $\mathcal O(\tau)$ where
$\tau$ is the time step. 
For the logarithmic potential case, since the continuous-time nonlinear propagator no longer enjoys
explicit analytic treatments, we employ a second order in time two-stage implicit Runge--Kutta (RK) nonlinear propagator together with an efficient Newton iterative solver.   We prove a maximum principle which ensures phase separation and
establish energy dissipation law under mild restrictions on the time step. These appear to be the first
rigorous results on the energy dissipation of Strang-type splitting methods for Allen-Cahn equations.
\end{abstract}

\section{Introduction}
In this work we consider the Allen-Cahn equation \cite{AC79} of the form
\begin{align} \label{1.1}
\begin{cases}
\partial_t u  = \varepsilon^2 \Delta  u-  f(u), \qquad (t,x) \in (0,\infty) \times \Omega; \\
u\Bigr|_{t=0} =u^0,
\end{cases}
\end{align}
where $u$ is a real-valued function corresponding to the concentration of a phase in a multi-component alloy, and $u^0$ is the initial condition.  For simplicity we take the spatial domain $\Omega$ to be the $2\pi$-periodic
torus $\mathbb T^d=[-\pi, \pi]^d$ in physical dimensions $d\le 3$. With some minor work
our analysis can be extended to many other situations.
The parameter $\varepsilon^2>0$ is the mobility coefficient which is fixed as
a constant. In its present non-dimensionalized form  the magnitude of $\varepsilon$  
governs the typical length scale of an interface in the dynamical evolution. 
 The nonlinear term $f(u)$ is taken as the derivative of a given potential function,
namely $f(u)= F^{\prime}(u)$.  We will be primarily concerned with two typical potential
functions. One is the standard double-well potential
\begin{align}
 F_{\mathrm{st}}(u)  = \frac 14 (u^2-1)^2
 \end{align}
whose extrema $u = \pm 1$ correspond to two different phases. The other is the logarithmic Flory--Huggins free energy  \cite{F42, H41}
\begin{equation}\label{eq:logpot}
F_{\mathrm{fh}} (u) = \frac{\theta}{2}[(1+u)\ln(1+u) + (1-u)\ln(1-u)] - \frac{\theta_c}{2} u^2,
\end{equation}
where $0<\theta<\theta_c$ denote the absolute temperature and the critical temperature respectively.
The condition $0<\theta<\theta_c$ is very physical since it ensures that $F_{\mathrm{fh}}$
has a double-well form with two equal minima situated at $u_+$ and $-u_+$, where $u_+>0$
is the positive root of the equation 
\begin{align}
0=F_{\mathrm{fh}}^{\prime}(u) =\frac{\theta}2 \ln \frac{1+u}{1-u} -\theta_c u.
\end{align}
 When the quenching
is shallow (i.e. $\theta$ is close to $\theta_c$), one can Taylor-expand near $u=0$
and obtain the standard polynomial approximation of the free energy. For smooth solutions of \eqref{1.1}, we have the energy dissipation 
\begin{align}
\frac {d} {dt} E(u) \le 0,
\end{align}
where
\begin{align}
E(u) = \int_{\Omega} \Bigl( \frac {\varepsilon^2} 2 |\nabla u |^2 + F(u)  \Bigr) dx,
\end{align}
and $F(u) =F_{\mathrm{st}}(u)$ or $F_{\mathrm{fh}}(u)$.  In practical numerical simulations,
the energy dissipation law is often used as a fidelity check of the algorithm. 

In this work we shall analyze the stability of second-order in time splitting methods applied
to the Allen-Cahn equation. Due to its simplicity  the operator splitting methods have 
found its ubiquitous presence in the numerical simulation of many physical problems,
including phase-field equations \cite{CKQT15, WT16, LQ1a, LQ1b, LQ1c, L21}, 
Schr\"odinger equations \cite{BJM02, T12, LW20}, and  the reaction-diffusion systems \cite{D01, Nie11}.  A prototypical second order in time method is the Strang splitting approximation
\cite{St68, M90}. Specifically for the Allen--Cahn equation under study,  we adopt the following
Strang splitting discretization 
\begin{equation}\label{eq:Strang}
u^{n+1} = \mathcal S_{\mathcal L}\left( \tau/2\right) \mathcal S_{\mathcal N}\left( \tau \right)\mathcal S_{\mathcal L}\left( \tau/2\right) u^n,
\end{equation}
where $\tau>0 $ denotes the time step,  and $S_{\mathcal L}(\frac 12\tau) 
=\exp( \varepsilon^2 \frac 12\tau \Delta)$ is  the linear propagator.  The nonlinear propagator 
$S_{\mathcal N}(\tau): a\mapsto u(\tau)$ is the nonlinear solution operator of the system
\begin{align} \label{1.8}
\begin{cases}
\partial_t  u = -f(u),\\
u\Bigr|_{t=0}=a.
\end{cases}
\end{align}
Denote  by $S_{\mathrm{ex}}(\tau)$ the exact nonlinear solution operator to \eqref{1.1}.  The 
propagator \eqref{eq:Strang} is a second order in time approximation in the sense that it admits
\begin{align}
&\text{$\mathcal O(\tau^3)$ one-step approximation error}: \qquad S_{\mathrm{ex}}(\tau) u^n = u^{n+1} + \mathcal O(\tau^3);  \label{1.9a} \\
& \text{$\mathcal O(\tau^2)$ in-$\mathcal O(1)$-time approximation error}: \qquad 
\sup_{n\tau \le T} \| u^n - S_{\mathrm{ex}} (n\tau) u^0 \| = \mathcal O(\tau^2). \label{1.9b}
\end{align}
Here $[0, T]$ is a given compact time interval,  $\|\cdot \|$ is some Sobolev norm and the implied
constants in $\mathcal O(\tau^2)$ can depend on $T$.  
As it turns out the numerical performance
of the scheme \eqref{eq:Strang} is quite good for solving the Allen-Cahn equation
\cite{WT16}. On the other hand, it should be noted that the somewhat heuristic estimates
\eqref{1.9a}--\eqref{1.9b}  rest on various subtle regularity assumptions on the exact 
solution and the numerical iterates.  A fundamental open issue is to establish the stability and regularity 
of the Strang splitting solutions in various Sobolev classes.  The very purpose of this paper is
to settle this problem for the Allen-Cahn equation \eqref{1.1} with the polynomial or the logarithmic potential nonlinearities. Our first result is concerned with the polynomial case.
Note that in this case the nonlinear propagator $S_{\mathcal N}(\tau)$ can be expressed
explicitly. 

\begin{thm}[Stability of Strang-splitting for AC, polynomial case] \label{thm1}
Let $\varepsilon>0$, $d\le 3$ and consider \eqref{1.1} on the periodic
torus $\mathbb T^d=[-\pi, \pi]^d$ with $f(u) = u^3-u$.  Let $\tau>0$ and
denote $S_{\mathcal L}(\tau) = \exp(\varepsilon^2 \tau \Delta)$.  Denote $S_{\mathcal N}(\tau)$
according to \eqref{1.8}.  Consider the Strang splitting discretization 
\begin{equation}
u^{n+1} = \mathcal S_{\mathcal L}\left( \tau/2\right) \mathcal S_{\mathcal N}\left( \tau \right)\mathcal S_{\mathcal L}\left( \tau/2\right) u^n, \qquad n\ge 0.
\end{equation}
The following hold.

\begin{enumerate}
\item \underline{The maximum principle}. For any $\tau>0$ and any $n\ge 0$, it holds that
\begin{align}
\| u^{n+1} \|_{\infty} \le \max \{ 1, \; \|u^n \|_{\infty} \}.
\end{align}
It follows that
\begin{align}
\sup_{n\ge 1} \| u^n \|_{\infty} \le \max\{1, \; \| u^0 \|_{\infty} \}.
\end{align}
In  particular if $\| u^0 \|_{\infty} \le 1$, then
\begin{align}
\sup_{n\ge 1} \|u^n \|_{\infty} \le 1.
\end{align}

\item \underline{Modified energy dissipation}. Let $u^0 \in H^1(\mathbb T^d)$. For any $\tau>0$ and any $n\ge 0$, we have
\begin{align}
\widetilde E^{n+1} \le \widetilde E^n.
\end{align}
Here (below $\langle, \rangle$ denotes the usual $L^2$ inner product)
\begin{align}
&\widetilde E^n= \frac 1{2\tau} \langle 
(1-e^{\varepsilon^2 \tau \Delta})u^n, \, u^n \rangle
+ \int_{\mathbb T^d} \widetilde F(\tilde u^n )dx  \\
& \quad\;  = \frac 1 {2\tau}
\langle (e^{-\varepsilon^2 \tau \Delta} -1) \tilde u^n, \, \tilde u^n \rangle
+ \int_{\mathbb T^d} \widetilde F(\tilde u^n) dx; \\
&\;\;\tilde u^n  = \mathcal S_{\mathcal L}(\tau/2) u^{n}; \\
& \;\;\widetilde F(\tilde u^n)  = \frac 14+\frac1{2\tau} (\tilde u^n)^2 - \frac{e^\tau}{\tau(e^{2\tau}-1)} \left(\sqrt{1+(e^{2\tau}-1)(\tilde u^n)^2}-1\right).\label{1.19}
\end{align}

\item \underline{Uniform Sobolev bounds}.  Let $u^0 \in H^{k_0}(\mathbb T^d)$ for some
$k_0\ge 1$. It holds that
\begin{align}
\sup_{n\ge 1} \| u^n \|_{H^{k_0}(\mathbb T^d)} \le C_1,
\end{align}
where $C_1>0$ depends only on ($\eps$, $k_0$, $d$, $\|u^0 \|_{H^{k_0}}$). 
Moreover for any $k\ge k_0$, we have
\begin{align}
\sup_{n\ge \frac {1} {\tau} } \| u^n \|_{H^{k} (\mathbb T^d) }
\le C_2,
\end{align}
where $C_2>0$ depends only on ($\eps$, $k$, $k_0$, $d$, $\| u^0 \|_{H^{k_0} } $ ).

\item \underline{Connection with the standard energy}.  Let $u^0$ be smooth (for example $u^0 \in H^{20}(\mathbb T^d)$).
For $0<\tau\le 1$, we have
\begin{align} \label{Eu_1.22}
\sup_{n\ge 0} | \widetilde E^n - E(u^n ) | \le C_3 \tau,
\end{align}
where $C_3>0$ depends only on ($\eps$, $d$, $u^0$).

\item \underline{Uniform second order approximation}. Assume the initial data $u^0$ is sufficiently smooth (for example $u^0 \in H^{40}(\mathbb T^d)$).  Let $u$ be the exact PDE solution
to \eqref{1.1}  corresponding to initial data $u^0$. Let $0<\tau \le 1$.
 Then for any $T>0$, we have
\begin{align} \label{Eu_1.23}
\sup_{n\ge 1, n\tau \le T}  \| u^n - u(n\tau, \cdot ) \|_{L^2(\mathbb T^d)}
\le C \cdot \tau^2,
\end{align}
where $C>0$ depends on ($\eps$, $u^0$, $T$).

\end{enumerate}
\end{thm}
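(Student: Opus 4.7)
The plan is to prove the five parts in sequence, with part (1) serving as the $L^\infty$ input for parts (2)-(5). For part (1), I exploit that the Bernoulli-type ODE $\dot u = u - u^3$ integrates explicitly to give
\[
\phi(a) := \mathcal S_{\mathcal N}(\tau) a = \frac{a\, e^\tau}{\sqrt{1 + a^2(e^{2\tau}-1)}}.
\]
Squaring shows $|\phi(a)| \le 1$ when $|a|\le 1$ (the inequality collapses to $a^2 \le 1$) and $|\phi(a)| \le |a|$ when $|a| \ge 1$ (it becomes $(a^2-1)(e^{2\tau}-1)\ge 0$), so in either case $|\phi(a)| \le \max\{1,|a|\}$. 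The periodic heat kernel is positive with unit integral, hence $\mathcal S_{\mathcal L}(\tau/2)$ is an $L^\infty$-contraction, and composing the three pieces of the Strang step yields the maximum principle.

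Part (2) is the heart of the theorem and I expect to be the main technical obstacle. The algebraic identity driving \eqref{1.19} is
\[
\widetilde F'(s) = \frac{s - \phi(s)}{\tau},
\]
from which I first establish the pointwise inequality $\widetilde F(\phi(s)) \le \widetilde F(s)$ for every $s \in \mathbb R$. Writing $\alpha = e^{2\tau}-1$ and $\beta = e^\tau/(\tau\alpha)$ and rationalising the difference, one obtains
\[
\widetilde F(\phi(s)) - \widetilde F(s) = \frac{\phi(s)^2 - s^2}{2\tau}\cdot \frac{G(s) - 2e^\tau}{G(s)},\qquad G(s):=\sqrt{1+\alpha\phi(s)^2}+\sqrt{1+\alpha s^2}.
\]
Using the identity $(1+\alpha\phi(s)^2)(1+\alpha s^2) = 1+\alpha s^2(1+e^{2\tau})$ one verifies that $G$ is strictly monotone in $s^2$ and $G(\pm 1)=2e^\tau$, so the two factors have opposite signs and the product is $\le 0$. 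To upgrade this pointwise bound to the full Strang step $\tilde u^n \mapsto w := \phi(\tilde u^n) \mapsto \tilde u^{n+1} = \mathcal S_{\mathcal L}(\tau) w$, I decompose $\widetilde E^{n+1}-\widetilde E^n$ through the intermediate state $w$ and combine (i) the nonlinear-step potential decrease just established, (ii) the Fourier-side decrease of the quadratic form $\tfrac{1}{2\tau}\langle(e^{-\eps^2\tau\Delta}-1)\,\cdot\,,\,\cdot\,\rangle$ under the heat half-step, and (iii) Jensen's inequality applied to the convex $s\mapsto\sqrt{1+\alpha s^2}$ against the positive heat kernel. The delicate point will be to organise (i)-(iii) so that the $\beta$-weighted potential gain from Jensen in the heat half-step is exactly absorbed by the quadratic decrease; this tight accounting is what forces the precise form of $\widetilde F$ in \eqref{1.19}.

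Parts (3)-(5) then follow more standard templates. For (3), the $L^\infty$ bound from (1) keeps $\tilde u^n$ in a region where $\phi$ is a smooth Nemytskii map with uniformly bounded derivatives, so Moser-type product estimates together with the $H^k$-boundedness of $\mathcal S_{\mathcal L}$ propagate the $H^{k_0}$ norm inductively; the high-regularity bound for $n\ge 1/\tau$ exploits the cumulative smoothing of $\gtrsim 1/\tau$ heat half-steps, whose effective heat time is $\gtrsim 1$. For (4), Taylor-expand $\beta(\sqrt{1+\alpha s^2}-1)$ using $\alpha = 2\tau+\mathcal O(\tau^2)$ and $\beta = (2\tau^2)^{-1}+\mathcal O(\tau^{-1})$ to obtain $\widetilde F(s) = F_{\mathrm{st}}(s) + \mathcal O(\tau)$ uniformly on the $L^\infty$-ball from (1), and expand the symbol $\tfrac{1}{2\tau}(1-e^{-\eps^2\tau|k|^2}) = \tfrac{\eps^2|k|^2}{2}+\mathcal O(\tau|k|^4)$, absorbing the remainder via the $H^2$ bound of (3). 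For (5), write the one-step error $\mathcal S_{\mathcal L}(\tau/2)\mathcal S_{\mathcal N}(\tau)\mathcal S_{\mathcal L}(\tau/2) - \mathcal S_{\mathrm{ex}}(\tau)$ via a third-order Taylor/Baker-Campbell-Hausdorff expansion, which is $\mathcal O(\tau^3)$ on sufficiently smooth iterates; a discrete Gronwall argument applied to the Duhamel form then upgrades this local error to the global $\mathcal O(\tau^2)$ $L^2$ bound on $[0,T]$, with all commutator terms uniformly controlled by the high-Sobolev bounds of (3).
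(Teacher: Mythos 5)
Your parts (1) and (3)--(5) are fine and essentially follow the paper's route: the explicit formula for $\mathcal S_{\mathcal N}(\tau)$ and the case split $|a|\le 1$ vs.\ $|a|\ge 1$ for the maximum principle, the $L^\infty$-based propagation of Sobolev norms, the Taylor expansion of $\widetilde F$ in $\tau$ for the energy comparison, and the local-error-plus-Gronwall argument for second-order convergence. The problem is part (2), which you correctly identify as the heart of the theorem, and where your proposed decomposition has a concrete gap.

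You split the step $\tilde u^n \mapsto w:=\phi(\tilde u^n)\mapsto \tilde u^{n+1}=\mathcal S_{\mathcal L}(\tau)w$ and account for three effects: (i) the pointwise decrease $\widetilde F(\phi(s))\le\widetilde F(s)$, (ii) the decrease of the quadratic form $\frac{1}{2\tau}\langle(e^{-\eps^2\tau\Delta}-1)\cdot,\cdot\rangle$ under the heat step, and (iii) Jensen for the $\sqrt{1+\alpha s^2}$ term under the heat step. What is missing from this ledger is the change of the quadratic form under the \emph{nonlinear} step, and that term has the wrong sign and is not controllable by (i). Indeed $\phi'(a)=e^\tau(1+(e^{2\tau}-1)a^2)^{-3/2}$, so $\phi'(0)=e^\tau>1$: near $u=0$ the nonlinear propagator amplifies gradients. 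Taking $v=\delta\cos(Nx)$ one finds the quadratic form grows by roughly a factor $e^{2\tau}$, i.e.\ by an amount of order $(e^{2\tau}-1)\frac{1}{2\tau}(e^{\eps^2\tau N^2}-1)\delta^2$, which is unbounded in $N$, while the pointwise potential decrease from (i) is only $O(\delta^2/\tau)$. So the intermediate-state energy $\mathcal E[w]$ can exceed $\mathcal E[\tilde u^n]$, and no sub-step-wise argument of the kind you sketch can close; the compensation genuinely requires the subsequent heat step and cannot be localized to either factor of the splitting.

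The paper avoids this by never decomposing the step. From $e^{-\eps^2\tau\Delta}\tilde u^{n+1}=\phi(\tilde u^n)$ and the identity $\widetilde F'(s)=\frac{s-\phi(s)}{\tau}$ (which you found), one writes
\begin{equation*}
\frac 1{\tau}\bigl(e^{-\eps^2\tau\Delta}-1\bigr)\tilde u^{n+1}+\frac 1{\tau}\bigl(\tilde u^{n+1}-\tilde u^n\bigr)=-\widetilde F'(\tilde u^n),
\end{equation*}
tests against $\tilde u^{n+1}-\tilde u^n$, uses the polarization identity $\langle A u^{n+1},u^{n+1}-u^n\rangle=\frac12\bigl(\langle Au^{n+1},u^{n+1}\rangle-\langle Au^n,u^n\rangle+\langle A(u^{n+1}-u^n),u^{n+1}-u^n\rangle\bigr)$ for $A=1-e^{\eps^2\tau\Delta}\ge 0$, and Taylor-expands $\widetilde F(\tilde u^{n+1})$ about $\tilde u^n$ with Lagrange remainder. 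This yields
\begin{equation*}
\widetilde E^{n+1}-\widetilde E^n=-\frac1{2\tau}\bigl\langle(1-e^{\eps^2\tau\Delta})(u^{n+1}-u^n),u^{n+1}-u^n\bigr\rangle-\Bigl\langle\Bigl(\frac1\tau-\frac12\widetilde F''(\xi^n)\Bigr)(\tilde u^{n+1}-\tilde u^n)^2,1\Bigr\rangle,
\end{equation*}
and the conclusion follows from the uniform bound $\widetilde F''(\xi)=\frac1\tau-\frac{e^\tau}{\tau(1+(e^{2\tau}-1)\xi^2)^{3/2}}<\frac1\tau\le\frac2\tau$ for all $\xi\in\mathbb R$ --- no maximum principle, no Jensen, and no sub-step accounting needed. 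I recommend replacing your part (2) argument with this one-shot computation.
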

\begin{rem}
Consider the function 
\begin{align}
h(x) = \frac 14+\frac1{2\tau} x - \frac{e^\tau}{\tau(e^{2\tau}-1)} \left(\sqrt{1+(e^{2\tau}-1)x}-1\right),\; x\in[0,\infty).
\end{align}
Clearly, $h(0) = \frac14$, $h(x)\rightarrow \infty$ as $x\rightarrow \infty$, and 
\begin{align}
h^\prime (x) = 0 \Leftrightarrow x=1.
\end{align}
We have 
\begin{align}
h(1) & = \frac14+\frac1{2\tau}-\frac{e^\tau}{\tau(e^\tau+1)}\\
& = \frac14-\frac1{2\tau}\frac{e^\tau-1}{e^\tau+1} \ge 0,\quad \forall \, 0\le\tau<\infty.
\end{align}
Thus $\widetilde F$ defined in \eqref{1.19} is always nonnegative. 
On the other hand, by using Fourier transform, we have
\begin{align}
\langle (e^{-\varepsilon^2 \tau \Delta} -1) w, \, w\rangle = c_d \sum_{0\ne k\in \mathbb Z^d} (e^{\varepsilon^2 \tau |k|^2} -1) |\widehat w(k)|^2\ge 0,
\end{align}
where $c_d>0$ depends only on the dimension $d$. 
Therefore $\widetilde E$ always stays nonnegative. 

\begin{figure}[!]
\includegraphics[trim = {0in 0.in 0.5in 0.2in},clip,width = 0.5\textwidth]{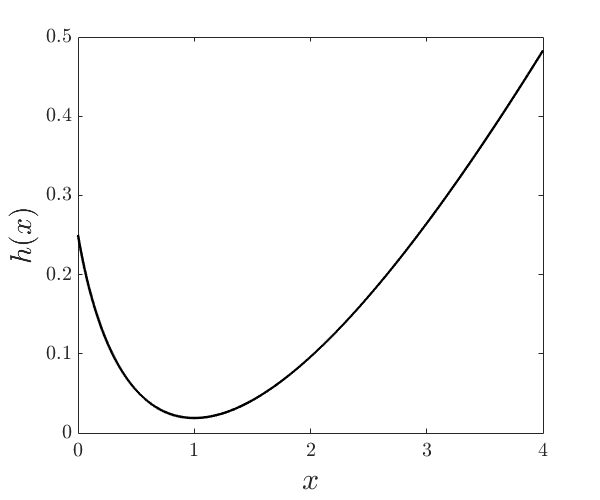}
\caption{$h(x)$ w.r.t. $x$ for $\tau = 1$.}\label{fig_h}
\end{figure}
\end{rem}

\begin{rem}
The regularity assumptions in \eqref{Eu_1.22} and \eqref{Eu_1.23} can be lowered.
However for simplicity of presentation we do not dwell on this issue in this work.
\end{rem}

Our second result focuses on the AC equation with the logarithmic potential \eqref{eq:logpot}, i.e.
\begin{align} \label{Eu_1.24}
\begin{cases}
\partial_t u = \eps^2 \Delta u - f^{\mathrm{LOG}}(u), 
\qquad f^{\mathrm{LOG}}(u) = -\theta_c u +\frac{\theta}2 \ln \frac{1+u}{1-u}; \\
u\Bigr|_{t=0} =u^0,
\end{cases}
\end{align}
where $0<\theta<\theta_c$.  It is not difficult to check that (cf. the analysis after \eqref{Eu_3.7})
$f^{\mathrm{LOG}}(u)$ admits a unique root in the interval $(0,1$) which we denote as
$u_*$.  For smooth solutions to \eqref{Eu_1.24}, we have the maximum principle:
$\| u(t,\cdot )\|_{\infty} \le u_*$  for all $t>0$ if $\| u_0 \|_{\infty} \le u_*$. On the other hand, it is a nontrivial task
to design suitable numerical discretization preserving this important maximum principle.

By direct analogy with the polynomial potential case, one can consider the exact Strang-type second order in time splitting scheme:
\begin{align}
u^{n+1} = \mathcal S_{\mathcal L}(\tau /2) \mathcal S_{\mathcal N}^{(\mathrm{LOG})}(\tau)
\mathcal S_{\mathcal L}(\tau/2) u^n,
\end{align}
where  $\mathcal S_{\mathcal L}(\tau) =
\exp(\eps^2 \tau \Delta)$ and 
  $\mathcal S_{\mathcal N}^{(\mathrm{LOG})} (\tau)$ is the solution operator $w_0\to w(\tau)$ of the equation
\begin{align}
\begin{cases}
\partial_t w = \theta_c w-\frac {\theta}2 \ln \frac{1+w}{1-w}, \quad 0<t\le \tau; \\
w\Bigr|_{t=0}=w_0.
\end{cases}
\end{align}
However  a pronounced difficulty with the implementation of the above scheme is the lack
of an explicit solution formula for the solver $\mathcal S_{\mathcal N}^{(\mathrm{LOG})} (\tau)$. 
To solve this problem we  approximate $\mathcal S_{\mathcal N}^{(\mathrm{LOG})} (\tau) $ by a further judiciously  chosen numerical discretization.  The choice of the numerical solver turns out to be  rather subtle and technically involved, since one has to control the truncation error to be within $\mathcal O(\tau^3)$ and preserve the maximum principle at the same time {(see the recent deep work of Li, Yang, and Zhou \cite{LYZ} where an ingenious cut-off procedure is developed)}.

To approximate $\mathcal S_{\mathcal N}^{(\mathrm{LOG})}(\tau)v$ for given $v$, we adopt the Pareschi and Russo's two-stage diagonally implicit Runge Kutta (PR-RK) method \cite{PR05}:
\begin{equation}
 \begin{aligned}
& u_1 = v+ a\tau f(u_1),\\
& u_2 = v+(1-2a)\tau f(u_1)+a \tau f(u_2),\\
& \mathcal S_{\mathcal N}^{(\mathrm{LOG})}(\tau) v
\approx \widetilde{\mathcal S}_{\mathcal N}(\tau)  v : = v+ \frac{1}{2}\tau f(u_1) + \frac{1}{2}\tau f(u_2).
\end{aligned}
\end{equation}
In the above $a$ is a tunable real-valued parameter.    

 We employ
the following RK-based Strang-type splitting for \eqref{Eu_1.24}:
\begin{align} \label{Eu_1.28}
&u^{n+1} = \mathcal S_{\mathcal L}(\tau/2)  \widetilde{\mathcal S}_{\mathcal N}(\tau) \mathcal S_{\mathcal L}(\tau/2) u^n.
\end{align}
In terms of  $ \tilde u^n = \mathcal S_{\mathcal L}(\tau/2) u^n$, we have:
\begin{align} \label{Eu_1.29}
\tilde u^{n+1} = \mathcal S_{\mathcal L}(\tau) 
 \widetilde{\mathcal S}_{\mathcal N}(\tau) \tilde u^n.
\end{align}

We have the following stability result concerning the logarithmic case.
\begin{thm}[Stability of RK-based Strang-splitting for AC, logarithmic case] \label{thm2}
Let $\varepsilon>0$, $d\le 3$, $0<\theta<\theta_c$ and consider \eqref{Eu_1.24} on the periodic
torus $\mathbb T^d=[-\pi, \pi]^d$. Recall $u_*$ is the unique root of 
$f^{\mathrm{LOG}}(u)$  in the interval $(0,1$).  Let $\tau>0$ and consider the RK-based
Strang-splitting scheme defined in \eqref{Eu_1.28} and equivalently expressed in terms of $\tilde u^n$ in \eqref{Eu_1.29}. Assume $u^0 \in H^1(\mathbb T^d)$ and  $\| u^0 \|_{\infty} \le u_*$.
Assume $ a\ge 1+\frac{\sqrt 2}{2}$ and $0<\tau\le \frac{1}{3a(\theta_c-\theta)}$.
The following hold.

\begin{enumerate}
\item \underline{The maximum principle}.  It holds that
\begin{align}
 \sup_{n\ge 1} \max\{ \| u^n \|_{\infty}, \; \| \tilde u^n \|_{\infty} \} \le u_*<1.
 \end{align}
\item  \underline{Modified energy dissipation}. We have 
\begin{align} 
& \sup_{n\ge 1} \max\{ \| u^n \|_{\infty}, \; \| \tilde u^n \|_{\infty} \} \le u_*; \\
 &\overline E^{n+1} \le \overline E^n,\quad \forall\; n\ge 1;  \\
&\overline E^n \coloneqq \frac1{2}\left\langle \frac{1}{\tau}(e^{-\eps^2\tau \Delta} -1) \tilde u^n, \tilde u^n \right\rangle + \int_{\mathbb T^d} \overline F(\tilde u^n) dx,
\end{align}
where   $\overline F$ is defined by \eqref{eq:F_log}.

\item \underline{Uniform Sobolev bounds}.  Let $u^0 \in H^{k_0}(\mathbb T^d)$ for some
$k_0\ge 1$. It holds that
\begin{align}
\sup_{n\ge 1} \| u^n \|_{H^{k_0}(\mathbb T^d)} \le B_1,
\end{align}
where $B_1>0$ depends only on ($\eps$, $k_0$, $d$, $\|u^0 \|_{H^{k_0}}$, $\theta$,
$\theta_c$). 
Moreover for any $k\ge k_0$, we have
\begin{align}
\sup_{n\ge \frac {1} {\tau} } \| u^n \|_{H^{k} (\mathbb T^d) }
\le B_2,
\end{align}
where $B_2>0$ depends only on ($\eps$, $k$, $k_0$, $d$, $\| u^0 \|_{H^{k_0} } $,
$\theta$,
$\theta_c$ ).

\item \underline{Connection with the standard energy}.  Let $u^0$ be smooth (for example $u^0 \in H^{20}(\mathbb T^d)$).
For $0<\tau\le 1$, we have
\begin{align} \label{Eu_1.36}
\sup_{n\ge 0} | \overline E^n - E(u^n ) | \le B_3 \tau,
\end{align}
where $B_3>0$ depends only on ($\eps$, $d$, $u^0$, $\theta$,
$\theta_c$).

\item \underline{Uniform second order approximation}. Assume the initial data $u^0$ is sufficiently smooth (for example $u^0\in H^{40}(\mathbb T^d)$).  Let $u$ be the exact PDE solution
to \eqref{Eu_1.24}  corresponding to initial data $u^0$. Let $0<\tau \le 1$.
 For any $T>0$, we have
\begin{align} \label{Eu_1.37}
\sup_{n\ge 1, n\tau \le T}  \| u^n - u(n\tau, \cdot ) \|_{L^2(\mathbb T^d)}
\le \widetilde C \cdot \tau^2,
\end{align}
where $\widetilde C>0$ depends on ($\eps$, $u^0$, $T$, $\theta$,
$\theta_c$).
\end{enumerate}
\end{thm}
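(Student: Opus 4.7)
The plan is to establish the five assertions in the stated order, since each builds on the previous. The conceptual heart is parts (1)--(2): once the maximum principle and modified-energy dissipation are in hand for the composite operator $\widetilde{\mathcal S}_{\mathcal N}(\tau)$, parts (3)--(5) follow by arguments parallel to those of Theorem \ref{thm1}. Concretely, (3) bootstraps Sobolev regularity via the smoothing estimate $\|e^{\eps^2\tau\Delta/2}\|_{H^{k_0}\to H^k}\lesssim (\eps^2\tau)^{-(k-k_0)/2}$ combined with the $L^\infty$ bound from (1) and the $H^{k_0}$-control furnished by $\overline E^n$; (4) is a Taylor expansion of $\overline F$ about $F_{\mathrm{fh}}$ in powers of $\tau$, whose remainder is absorbed by the $L^\infty$ bound; and (5) is a Lady-Windermere's-fan argument coupling the $\mathcal O(\tau^3)$ one-step truncation error of the PR-RK--Strang composition on smooth solutions with the $L^2$-stability already encoded by (1) and (3).

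For (1), I pass to the equivalent iteration \eqref{Eu_1.29} for $\tilde u^n$. Since $\mathcal S_{\mathcal L}(\tau)$ is an $L^\infty$-contraction by the classical heat-equation maximum principle, it suffices to show that $\widetilde{\mathcal S}_{\mathcal N}(\tau)$ maps $\{|v|\le u_*\}$ into itself. Setting $f(w)=\theta_c w-\tfrac{\theta}{2}\ln\tfrac{1+w}{1-w}$ and $g(w)=w-a\tau f(w)$, the bound $f'(w)=\theta_c-\tfrac{\theta}{1-w^2}\le \theta_c-\theta$ on $(-1,1)$ together with $\tau\le \tfrac{1}{3a(\theta_c-\theta)}$ yields $g'(w)\ge \tfrac{2}{3}$ on $(-1,1)$, and $f(\pm u_*)=0$ gives $g(\pm u_*)=\pm u_*$. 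Hence $g$ is a strictly monotone bijection of $[-u_*,u_*]$ onto itself, so the stage-1 equation $g(u_1)=v$ has a unique solution $u_1\in[-u_*,u_*]$ whenever $|v|\le u_*$. The same inversion controls $u_2$ and the averaged update $v\mapsto v+\tfrac{\tau}{2}(f(u_1)+f(u_2))$ provided the inputs $v+(1-2a)\tau f(u_1)$ and $v+\tfrac{\tau}{2}(f(u_1)+f(u_2))$ can be shown to lie in $[-u_*,u_*]$. The threshold $a\ge 1+\tfrac{\sqrt{2}}{2}$ enters precisely here: bounding $|f|$ on $[-u_*,u_*]$ and combining with $|v|\le u_*$, the worst-case requirement that those intermediate quantities stay below $u_*$ reduces to a quadratic inequality in $a$ whose positive branch is exactly $1+\tfrac{\sqrt{2}}{2}$.

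For (2), I construct $\overline F$ as a Lyapunov integrand tailored to the PR-RK pair, chosen so that, under the pointwise bounds from (1), a stage-level inequality of the form $\overline F(\widetilde{\mathcal S}_{\mathcal N}(\tau)v)\le \overline F(v)-\tfrac{\tau}{2}\bigl(f(u_1)^2+f(u_2)^2\bigr)$ holds; spatial integration then yields dissipation for the nonlinear substep. The quadratic part of $\overline E^n$ is handled by the same Fourier-side algebra as in Theorem \ref{thm1}: since $\mathcal S_{\mathcal L}(\tau)$ is a multiplier contraction mode-by-mode, it decreases $\tfrac{1}{2\tau}\langle(e^{-\eps^2\tau\Delta}-1)w,w\rangle$, and this gain absorbs the nonlinear contribution, giving $\overline E^{n+1}\le \overline E^n$. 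The principal obstacle of the whole proof is part (1): keeping the PR-RK averaged update inside $[-u_*,u_*]$ despite the possibly adverse sign of $(1-2a)\tau f(u_1)$ in stage 2 is what couples the two restrictions $a\ge 1+\tfrac{\sqrt{2}}{2}$ and $\tau\le \tfrac{1}{3a(\theta_c-\theta)}$; once this is cleared, (2)--(5) proceed along the established lines.
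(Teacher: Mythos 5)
Your overall architecture matches the paper's: reduce (1) to showing $\widetilde{\mathcal S}_{\mathcal N}(\tau)$ maps $\{|v|\le u_*\}$ into itself and use $L^\infty$-contractivity of the heat semigroup, build a tailored potential $\overline F$ for (2), and defer (3)--(5) to the polynomial-case machinery. Your stage-1 solvability argument (monotonicity of $w\mapsto w-a\tau f(w)$ from $\tau\le\frac{1}{3a(\theta_c-\theta)}$, plus $f(\pm u_*)=0$) is essentially the paper's Lemma \ref{lem:conv2}. However, there are genuine gaps at the two decisive steps. In (1), the mechanism you propose for keeping the averaged update inside $[-u_*,u_*]$ --- ``bounding $|f|$ on $[-u_*,u_*]$ and combining with $|v|\le u_*$'' --- cannot yield the stated theorem: any such magnitude bound forces a restriction on $\tau$ in terms of $\sup_{|u|\le u_*}|f|$, whereas the theorem's only restriction is $\tau\le\frac{1}{3a(\theta_c-\theta)}$ (the paper even remarks that its conditions do not depend on $\sup_{|u|\le u_*}|g'(u)|$, which can be huge). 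The actual argument never estimates $|f|$: it uses the stage equations to eliminate $f$, writing $\tau f(u_1)=\frac1a(u_1-v)$ and $\tau f(u_2)=\frac1a(u_2-v)-\frac{1-2a}{a^2}(u_1-v)$, so that $\widetilde{\mathcal S}_{\mathcal N}(\tau)v=\frac{1}{2a}u_2+\bigl(\frac{3}{2a}-\frac{1}{2a^2}\bigr)u_1+\bigl(1-\frac{2}{a}+\frac{1}{2a^2}\bigr)v$ is a convex combination of $v,u_1,u_2$; the threshold $a\ge1+\frac{\sqrt2}{2}$ is precisely nonnegativity of the coefficient $1-\frac{2}{a}+\frac{1}{2a^2}$. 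You also leave unproved that the stage-2 right-hand side $v+(1-2a)\tau f(u_1)$ lies in $[0,u_*)$; this is where $f(u)\le(\theta_c-\theta)u$ and the $\tau$-restriction enter, via $-u_1+(3a-1)\tau f(u_1)\le 0$.

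In (2), the proposed decomposition --- a stage-level inequality $\overline F(\widetilde{\mathcal S}_{\mathcal N}(\tau)v)\le\overline F(v)-\frac{\tau}{2}\bigl(f(u_1)^2+f(u_2)^2\bigr)$ for the nonlinear substep, with the linear substep dissipating the quadratic part and ``absorbing'' the rest --- does not close. The heat semigroup does not decrease $\int\overline F(w)\,dx$ for a non-convex $\overline F$, the asserted stage-level inequality is neither proved nor of the form actually needed, and ``absorbs the nonlinear contribution'' is not an argument. The paper instead defines $\overline F$ by $\overline F'(u)=-\frac12 f(u_1(u))-\frac12 f(u_2(u))$ with $u_1(\cdot),u_2(\cdot)$ the implicit stage maps, so the full step reads $\frac1\tau(e^{-\eps^2\tau\Delta}-1)\tilde u^{n+1}+\frac1\tau(\tilde u^{n+1}-\tilde u^n)=-\overline F'(\tilde u^n)$, and then proves the pointwise bound $\overline F''(u)\le\frac{2}{\tau}$ on $[-u_*,u_*]$ by differentiating the stage equations (the second place where $\tau\le\frac{1}{3a(\theta_c-\theta)}$ is used). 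A single Taylor expansion together with the quadratic-form identity for $\langle(e^{-\eps^2\tau\Delta}-1)\tilde u^{n+1},\tilde u^{n+1}-\tilde u^n\rangle$ then gives $\overline E^{n+1}-\overline E^n\le-\langle(\frac1\tau-\frac12\overline F''(\xi^n))(\tilde u^{n+1}-\tilde u^n)^2,1\rangle\le0$. Without identifying this $\overline F$ and the bound $\overline F''\le 2/\tau$, part (2) of your proposal does not go through.
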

\begin{rem}
More generally, one can also show that if $\| u^0 \|_{\infty} \le \beta <1$ for some
$\beta \in [u_*,1)$, then $\| u^n \|_{\infty} \le \beta$ for all $n\ge 1$. This slightly more maximum principle covers most common cases in practical simulations.  In many cases $u_*$ is already 
close to the limit value $1$. For example  if $\theta=\frac 14$, $\theta_c=1$ and $\eps=0.01$, then
$u_*\approx 0.99933$ which already serves as a good upper bound from a practical point of view.
\end{rem}

The statement (3)--(5) in Theorem \ref{thm2} can be proved in a similar way as in the polynomial case and we omit
the repetitive details. The rest of this paper is organized as follows.
In Section \ref{sect2}, we give the proof of Theorem \ref{thm1}. 
Section \ref{sect3} is devoted to the proof of Theorem \ref{thm2}. As mentioned above we focus
on proving statement (1)--(2) in Theorem \ref{thm2}. We give detailed exposition and motivation for
these results therein. In Section \ref{sect4}, we carry out extensive numerical simulations to  showcase the stability and convergence of the Strang-splitting methods for both the polynomial
and the logarithmic cases.  The last section contains some  concluding remarks.

\section{Proof of Theorem \ref{thm1} }\label{sect2}
In this section we carry out the proof of Theorem \ref{thm1}. We divide the proof into several
steps.
\subsection{The maximum principle}
Recall that $u^{n+1} = \mathcal S_{\mathcal L}\left( \tau/2\right) \mathcal S_{\mathcal N}\left( \tau \right)\mathcal S_{\mathcal L}\left( \tau/2\right) u^n$.  
Since $ \|S_{\mathcal L}(\tau) a \|_{\infty} \le \| a\|_{\infty}$ for any $\tau>0$, we only need to examine $S_{\mathcal N}(\tau)$.
By definition, the nonlinear solver is
\begin{align} \label{eu_2.1}
\begin{cases}
\partial_t u = u -u^3, \quad 0<t \le \tau; \\
u\Bigr|_{t=0}=a.
\end{cases}
\end{align}
Thanks to the explicit polynomial nonlinearity, one can solve the above equation explicitly and obtain
\begin{equation}
u(t) = \frac{e^{t} a}{\sqrt{1+ (e^{2t} - 1)a^2}}.
\end{equation}
This renders the solution operator $S_{\mathcal N}(\tau)$ as
\begin{equation}\label{eq:SN_poly}
\mathcal S_{\mathcal N}(\tau) a = \frac{e^\tau a}{\sqrt{1+ (e^{2\tau} - 1)a^2}}.
\end{equation}
By \eqref{eq:SN_poly}, we have 
\begin{align} \label{eu_2.4}
\| \mathcal S_{\mathcal N}(\tau) a \|_{\infty} \le \max\{ \|a\|_{\infty}, 1 \}.
\end{align}
This yields the desired maximum principle. Note that one can  also work
directly with \eqref{eu_2.1} to derive  \eqref{eu_2.4}.

\subsection{Modified energy dissipation}
Since $u^{n+1} = \mathcal S_{\mathcal L}\left( \tau/2\right) \mathcal S_{\mathcal N}\left( \tau \right)\mathcal S_{\mathcal L}\left( \tau/2\right) u^n$ and
$\tilde u^n = \mathcal S_{\mathcal L} \left(\tau/2\right) u^n$, we have
\begin{equation}
\tilde u^{n+1} = \mathcal S_{\mathcal L}\left( \tau\right) \mathcal S_{\mathcal N}\left( \tau \right) \tilde u^n.
\end{equation}
This yields
\begin{equation}
e^{-\eps^2\tau\Delta}\tilde u^{n+1} =  \frac{e^\tau \tilde u^n}{\sqrt{1+(e^{2\tau}-1) (\tilde u^n)^2}}.
\end{equation}
We rewrite the above as
\begin{equation} \label{eu_2.7}
\frac 1 {\tau} \left(e^{-\eps^2\tau\Delta}-1\right)\tilde u^{n+1}  +\frac 1{\tau} (\tilde u^{n+1}-\tilde u^n) =
\frac 1 {
\tau} \left(   \frac{e^\tau \tilde u^n}{\sqrt{1+(e^{2\tau}-1) (\tilde u^n)^2}} -\tilde u^n 
\right)=-\widetilde F'(\tilde u^n),
\end{equation}
where 
\begin{equation} \label{eu_2.8}
 \widetilde F(z)  = \frac 14+\frac1{2\tau} z^2 - \frac{e^\tau}{\tau(e^{2\tau}-1)} \left(\sqrt{1+(e^{2\tau}-1)z^2}-1\right).
\end{equation}
The harmless constant $1/4$ is inserted here so that $\widetilde F$ coincides with the standard
energy when $\tau\to 0$.
Observe that 
\begin{align} \label{eu_2.9}
\tilde F( \tilde u^{n+1}) = \tilde F
(\tilde u^n) + \tilde F^{\prime}(\tilde u^n) (\tilde u^{n+1} -\tilde u^n)
+ \frac 1 2 \tilde F^{\prime\prime}(\xi^n) (\tilde u^{n+1} -\tilde u^n)^2,
\end{align}
where $\xi^n$ is some function between $\tilde u^n$ and $\tilde u^{n+1}$. 
Also
\begin{align}
  & \frac 1 {\tau} \langle (e^{-\varepsilon^2\tau \Delta} -1) \tilde u^{n+1}, \tilde u^{n+1} -\tilde u^n \rangle \notag \\
=& \frac 1 {\tau} \langle ( 1-e^{\varepsilon^2\tau \Delta}) u^{n+1}, \, u^{n+1} -u^n \rangle
\notag \\
=& \frac 1{2\tau}
\Bigl(
\langle (1-e^{\varepsilon^2 \tau \Delta} ) u^{n+1}, u^{n+1} \rangle
-\langle (1-e^{\varepsilon^2\tau\Delta} ) u^n, u^n \rangle
+ \langle( 1-e^{\varepsilon^2\tau \Delta} ) (u^{n+1}-u^n), u^{n+1} - u^n \rangle 
\Bigr). \label{eu_2.10}
\end{align}  
Multiplying \eqref{eu_2.7} by $(\tilde u^{n+1}-\tilde u^n)$, integrating over $\mathbb T^d$
and using \eqref{eu_2.9}--\eqref{eu_2.10}, we  obtain 
\begin{equation}\label{ineq:decay}
\begin{aligned}
& \widetilde E^{n+1} - \widetilde E^n  \\
& = -\frac1{2\tau}\left\langle \left(1- e^{\eps^2\tau \Delta} \right) (u^{n+1}- u^n), (u^{n+1}-
 u^n) \right\rangle-\left\langle\left(\frac1\tau-\frac12 \widetilde F''(\xi^n)\right) (\tilde u^{n+1}-\tilde u^n)^2,1\right\rangle.
\end{aligned}
\end{equation}
It is not difficult to check that
\begin{equation}
\widetilde F''(\xi) = \frac1\tau - \frac{e^\tau}{\tau\left(1+(e^{2\tau}-1) \xi^2\right)^{\frac32}} .
\end{equation}
Clearly
\begin{equation}
\frac1\tau -\frac12 F''(\xi)\ge 0, \qquad\forall\, \xi \in \mathbb R.
\end{equation}
Thus we have $\widetilde E^{n+1} \le \widetilde E^n$ for all $n\ge 0$.

\subsection{Uniform Sobolev bounds}
To establish uniform Sobolev bounds on $u^n$, we first show that it suffices to prove 
\begin{align} \label{eu_2.14}
\sup_{n\ge 1} \| \tilde u^n \|_{H^{k_0}} \le D_1,
\end{align}
where $D_1>0$ depends only on ($\eps$, $k_0$, $\|u^0 \|_{H^{k_0} }$, $d$).

Indeed assume \eqref{eu_2.14} holds, we now check the uniform bound on
$u^{n+1}$.  For simplicity we conduct the argument for $k_0=1$, i.e. we check 
the $H^1$ bound. The general $k_0\ge 2$ case is similar and omitted {(see e.g. the bootstrap argument developed in \cite[Section 2.7.2]{LT21}).}
Since $u^{n+1} = \mathcal S_{\mathcal L}\left( \tau/2\right) \mathcal S_{\mathcal N}\left( \tau \right)\mathcal S_{\mathcal L}\left( \tau/2\right) u^n$ and
$\tilde u^n = \mathcal S_{\mathcal L} \left(\tau/2\right) u^n$, we have
\begin{align}
u^{n+1} = \mathcal S_{\mathcal L}\left(\tau/2 \right) 
\mathcal S_{\mathcal N} (\tau) \tilde u^n.
\end{align}
By examining the structure of the equation
$\partial_t u = u-u^3$, it is not difficult to check that
\begin{align}
& \|  \nabla ( \mathcal S_{\mathcal N}(\tau ) a ) \|_2 \le e^{\tau} \| \nabla a \|_2,  \label{eu_2.16}\\
& \| \mathcal S_{\mathcal N}(\tau) a \|_2 \le \max\{ \| a\|_2, \; C_d\}, \label{eu_2.17}
\end{align}
where $C_d>0$ is a constant depending only on the dimension $d$.  We then discuss
two cases. If $0<\tau \le 1$,  we use \eqref{eu_2.16}--\eqref{eu_2.17} and obtain
\begin{align}
\| u^{n+1} \|_{H^1}  \le \| \mathcal S_{\mathcal N}(\tau) \tilde u^n \|_{H^1} \le D_2,
\end{align}
where $D_2>0$ depends only on ($\eps$, $\|u^0 \|_{H^1}$, $d$). 
If $\tau>1$,  we use \eqref{eu_2.17} and obtain
\begin{align}
\| u^{n+1} \|_{H^1}  \lesssim \| \mathcal S_{\mathcal N}(\tau) \tilde u^n \|_{2} \le D_3,
\end{align}
where $D_3>0$ depends only on ($\eps$, $\|u^0 \|_{H^1}$, $d$).  Thus in both cases we obtain
the uniform bound on $u^{n+1}$.

We now focus on \eqref{eu_2.14}. For simplicity we assume $k_0=1$. The general case $k_0\ge 2$
follows along similar lines using smoothing estimates and we omit the details.  

Consider first the case $0<\tau \le 1$.  We rewrite
\begin{align}
\widetilde F(z)
& = \frac 14+ \frac 1{2\tau} z^2 
- \frac {e^{\tau}} {\tau} \cdot \frac{ z^2} { 1+ \sqrt{ 1+(e^{2\tau} -1) z^2 }  } \notag \\
& = \frac 14+ \frac 1 {2\tau} z^2
-\frac 1 {\tau} \cdot 
\frac {z^2} {1+\sqrt{1+(e^{2\tau}-1) z^2} } 
- \frac{e^{\tau}-1}{\tau} \cdot \frac {z^2}{1+\sqrt{1+(e^{2\tau}-1) z^2} } \notag \\
&=  \frac 14+\frac{e^{2\tau}-1} {2\tau}
\cdot \frac{z^4} { \big(1+\sqrt{1+(e^{2\tau}-1) z^2} \big)^2}
-  \frac{e^{\tau}-1}{\tau} \cdot \frac {z^2}{1+\sqrt{1+(e^{2\tau}-1) z^2} } \notag \\
&= \frac{e^{2\tau}-1}{2\tau}
\Bigl( \big(A -\frac 1{1+e^{\tau} } \big)^2 - \frac 1 {(1+e^{\tau} )^2} \Bigr) +\frac 14,
\end{align}
where $A= z^2/\big(1+\sqrt{1+(e^{2\tau}-1) z^2}\big)$. 

Since $d\le 3$, by using the above expression together with Sobolev embedding, we have
\begin{align}
\int_{\mathbb T^d} \widetilde F(u^0 ) dx \le D_4,
\end{align}
where $D_4>0$ depends only on ($\|u^0\|_{H^1(\mathbb T^d)}$, $d$).   It follows
that  uniformly in $n$, 
\begin{align}
\frac 12 \eps^2 \| \nabla \tilde u^n \|_2^2 \le 
\frac 1 {2\tau}
\langle (e^{-\varepsilon^2 \tau \Delta} -1) \tilde u^n, \, \tilde u^n \rangle \le D_5,
\end{align}
where $D_5>0$ depends only on ($\|u^0 \|_{H^1(\mathbb T^d)}$, $d$, $\eps$).
By \eqref{eu_2.17}, it is not difficult to obtain uniform control of $\|\tilde u^n \|_2$. 
The desired uniform $H^1$ bound on $\tilde u^n$ follows easily. 

Next we consider the case $\tau>1$.  By \eqref{eu_2.8}, $ \int_{\mathbb T^d} \widetilde F(\tilde u^n) dx$ is clearly
controlled by $L^2$-norm of $\tilde u^n$. Since we have uniform control of
$\| \tilde u^n \|_2$, the desired uniform $H^1$ bound on $\tilde u^n$ follows easily.

\subsection{Connection with the standard energy}
Next, we show that the modified energy  coincides with the standard energy as the time step $\tau$ tends to $0$, i.e.
\begin{align}
\sup_{n\ge 0} | \widetilde E^n - E(u^n ) | \lesssim \tau, \label{Eu_2.23}
\end{align}
where the implied constant depends on ($\eps$, $d$, $u^0$).  Note here the working assumption
is $u^0 \in H^{20}(\mathbb T^d)$ and $0<\tau \le 1$. By the uniform Sobolev regularity result derived earlier, we have
uniform control of $H^{20}$-norm of $u^n$ for all $n\ge 0$.  Furthermore thanks to the uniform
Sobolev bound on $u^n$,  we only need
examine the regime $0<\tau \ll 1$. 

Firstly observe that
\begin{align}
\left| \frac 1 {2\tau}
\langle ( 1- e^{\eps^2 \tau \Delta} ) u^n, \, u^n \rangle
- \frac 12 \langle \eps^2 (-\Delta) u^n, u^n \rangle \right| \lesssim \tau.
\end{align}

Next for $0<\tau \ll 1$, we  have
\begin{equation}
\begin{aligned}
 \widetilde F(\tilde u^n)  & =\frac 14 +\frac1{2\tau} (\tilde u^n)^2 -
 \frac{e^\tau}{\tau(e^{2\tau}-1)}\left( \sqrt{1+(e^{2\tau}-1)(\tilde u^n)^2} -1\right) \\
&= \frac 14+\frac1{2\tau} (\tilde u^n)^2 - \frac{e^\tau}{\tau(e^{2\tau}-1)}\left( \frac12(e^{2\tau}-1) (\tilde u^n)^2 -\frac18(e^{2\tau}-1)^2 (\tilde u^n)^4 +\mathcal O((e^{2\tau}-1)^3)\right)\\
& = \frac 14-\frac{e^\tau-1}{2\tau} (\tilde u^n)^2 + \frac{1}{8\tau}e^\tau(e^{2\tau}-1) (\tilde u^n)^4+\mathcal O(\tau^{-1}(e^{2\tau}-1)^2 )\\
& =\frac 14 -\frac12 (\tilde u^n)^2 + \frac14 (\tilde u^n)^4 +\mathcal O(\tau).
\end{aligned}
\end{equation}
Since $u^n$ and $\tilde u^n$ differ by $\mathcal O(\tau)$,  we obtain
\begin{align}
&\left|\widetilde F(\tilde u^n) - \frac 14 ((u^n)^2-1)^2 \right| \lesssim \tau; \\
&\left|\int_{\mathbb T^d} \widetilde F(\tilde u^n)  dx - 
\int_{\mathbb T^d} \frac 14 ((u^n)^2-1)^2 dx\right| \lesssim \tau.
\end{align}
Thus \eqref{Eu_2.23} is shown.

\subsection{Uniform second-order approximation}
For convenience of notation, we denote
\begin{align}
L = \eps^2 \Delta.
\end{align}
We first check the consistency for the propagator $
\mathcal S_{\mathcal L}(\frac{\tau}2)\mathcal S_{\mathcal N} (\tau) 
\mathcal S_{\mathcal L}(\frac {\tau}2)$. 
Concerning the operator $\mathcal S_{\mathcal N}(\tau)$, we  note that if
\begin{align}
\begin{cases}
\partial_t w = w-w^3, \quad 0<t\le \tau; \\
w\Bigr|_{t=0} = b,
\end{cases}
\end{align}
where $w$ admits uniform control of its Sobolev norm, then 
\begin{align}
w(\tau) = b + \tau (b-b^3) + \frac 12 \tau^2
(1-3b^2)(b-b^3) + \mathcal O(\tau^3).
\end{align}
If $b= \mathcal S_{\mathcal L}(\frac{\tau}2) a = a + \frac{\tau}2 L a
+ \frac{\tau^2} 8 L^2 a +\mathcal O (\tau^3)$, then we can simplify the above
further and obtain
\begin{align}
w(\tau)&=a+\frac{\tau}2 L a + \frac{\tau^2}8 L^2 a
+ \tau \Bigl( (a+\frac{\tau}2 La) - (a+ \frac{\tau}2 L a)^3 \Bigr)
+ \frac{\tau^2}2 (1-3a^2)(a-a^3) + \mathcal O(\tau^3) \notag \\
& = a+ \tau( a-a^3+\frac 12 La)+
\tau^2 \Bigl( \frac 18 L^2 a +\frac 12 La -\frac 32 a^2  La
+ \frac 12 (1-3a^2) (a-a^3) \Bigr)+ \mathcal O(\tau^3).
\end{align}
Now if $u= \mathcal S_{\mathcal L}(\frac{\tau}2)\mathcal S_{\mathcal N} (\tau) 
\mathcal S_{\mathcal L}(\frac {\tau}2) a$, we have 
\begin{align}
u &=\mathcal S_{\mathcal L}(\frac{\tau}2) w(\tau) + \mathcal O(\tau^3) \notag \\
& =  \mathcal S_{\mathcal L}(\frac {\tau}2) (a+\frac 12 \tau  La + \frac 18 \tau^2 L^2 a) \notag \\
& \quad + \mathcal S_{\mathcal L}(\frac {\tau}2)
\left( \tau(a-a^3) +\tau^2 \Bigl(  \frac 12 La -\frac 32a^2  La
+ \frac 12 (1-3a^2) (a-a^3) \Bigr) \right)+ \mathcal O(\tau^3) \notag \\
&= \mathcal S_{\mathcal L}(\tau) a+ \tau (a-a^3)
+ \frac{\tau^2}2
\Bigl( L(a-a^3)+ La -3a^2 La+(1-3a^2)(a-a^3) \Bigr) 
+ \mathcal O(\tau^3). \label{Eu_2.32}
\end{align}
We now turn to the expansion  of the exact PDE solution. 
Let $u^{\mathrm{P}}$ be the exact PDE solution to \eqref{1.1} with initial data $\tilde a$.  We have
\begin{align}
u^{\mathrm{P}} (\tau) & = \mathcal S_{\mathcal L}(\tau) \tilde a + \int_0^{\tau} 
\mathcal S_{\mathcal  L}(\tau -s) (u(s)-u(s)^3) ds \notag \\
& =\mathcal S_{\mathcal L}(\tau)\tilde a + \int_0^{\tau} ( 1+ (\tau-s) L) (u(s)-u(s)^3) ds + \mathcal O(\tau^3) \notag \\
& = \mathcal S_{\mathcal L}(\tau)\tilde a+ \int_0^{\tau} 
\Bigl(  \tilde a -\tilde a^3 + s( 1-3\tilde a^2)  (L\tilde a +\tilde a -\tilde a^3) 
\Bigr) ds + 
\int_0^{\tau} (\tau-s)L (\tilde a-{\tilde a}^3) ds + \mathcal O(\tau^3) \notag \\
& = \mathcal S_{\mathcal L}(\tau)\tilde a+ \tau (\tilde a-{\tilde a}^3)
+ \frac {\tau^2}2 \Bigl(  L(\tilde a-{\tilde a}^3) +(1- 3{\tilde a}^2 )(L\tilde a+ \tilde a-{\tilde a}^3) \Bigr)+ \mathcal O(\tau^3). \label{Eu_2.33}
\end{align}
Clearly  \eqref{Eu_2.32} and \eqref{Eu_2.33} have the same form in  $\mathcal O(\tau^3)$.

{
Albeit standard, we now outline how to obtain the global error estimate $\mathcal O(\tau^2)$ for $n\tau\le T$. 
Denote $\mathcal T(\tau)$ as the solution operator $u(0)\mapsto u(\tau)$ to the exact PDE problem:
\begin{align} \label{1.1}
\begin{cases}
\partial_t u  = \varepsilon^2 \Delta  u-  f(u),\\
u\Bigr|_{t=0} =u(0).
\end{cases}
\end{align}
Since we assume high regularity on the initial data $u^0$ (see the description before \eqref{Eu_1.23}), we have 
\begin{equation}\label{J18_22a}
\sup_{n \tau\le T }
\left( \| u^n \|_{H^{k} (\mathbb T^d) }+\| u(n\tau) \|_{H^{k} (\mathbb T^d) }\right)\le C,
\end{equation}
where $u(n\tau)=\mathcal T(n\tau)u^0$ corresponds to the exact PDE solution. 
Now we write
\begin{equation}
\left\{
\begin{aligned}
&u^n = \underbrace{\mathcal S_{\mathcal L}(\frac{\tau}2)\mathcal S_{\mathcal N} (\tau) 
\mathcal S_{\mathcal L}(\frac {\tau}2)}_{=: \mathcal S(\tau)} u^{n-1},\\
&u(n\tau) = \mathcal T(\tau) u((n-1)\tau).
\end{aligned}
\right.
\end{equation} 
Clearly by the triangle inequality, we have
\begin{equation}
\|u^n-u(n\tau)\|_2\le 
\|\mathcal S(\tau) u^{n-1}-\mathcal T(\tau) u^{n-1}\|_2
+ \|\mathcal T(\tau) u^{n-1}-\mathcal T(\tau) u((n-1)\tau)\|_2.
\end{equation}
By using \eqref{Eu_2.32}, \eqref{Eu_2.33} and \eqref{J18_22a}, we have
\begin{equation}
\|\mathcal S(\tau) u^{n-1}-\mathcal T(\tau) u^{n-1}\|_2\le B_1\tau^3,
\end{equation} 
where $B_1>0$ is independent of $\tau$. 
By stability of the exact PDE solution and \eqref{J18_22a}, we have
\begin{equation}
\|\mathcal T(\tau) u^{n-1}-\mathcal T(\tau) u((n-1)\tau)\|_2\le  e^{B_2\tau} \|u^{n-1}-u((n-1)\tau)\|_2,
\end{equation}
where $B_2>0$ is independent of $\tau$. 
It follows that 
\begin{equation}
\|u^n-u(n\tau)\|_2 \le e^{B_2\tau} \|u^{n-1}-u((n-1)\tau)\|_2 + B_1\tau^3. 
\end{equation}
An elementary analysis gives
\begin{equation}
\sup_{n\tau\le T} \|u^n-u(n\tau)\|_2 \le \mathcal O(\tau^2). 
\end{equation}
}

\section{The case with logarithmic potentials}\label{sect3}
In this section, we consider the Allen--Cahn equation with logarithmic potential, i.e.
\begin{equation}\label{Eu_3.1}
\partial_t u = \varepsilon^2 \Delta u + \theta_c u -\frac{\theta}{2} \bigl( \ln(1+u) - \ln(1-u) \bigr),
\end{equation}
where $0<\theta<\theta_c$. We shall consider Strang-type second order in time splitting.
Define $\mathcal S_{\mathcal L}(\tau) =
\exp(\eps^2 \tau \Delta)$. 
For the nonlinear propagator, it is natural to consider the equation
\begin{align}\label{eq3-3}
\begin{cases}
\partial_t w = \theta_c w-\theta\artanh(w), \quad 0<t\le \tau; \\
w\Bigr|_{t=0}=w_0.
\end{cases}
\end{align}
Here 
\begin{equation}
\artanh(w) = \frac{1}{2} \bigl( \ln(1+w) - \ln(1-w) \bigr).
\end{equation}
is the inverse hyperbolic function.  Define $\mathcal S_{\mathcal N}^{(\mathrm{LOG})} (\tau)$
as the nonlinear solution operator $w_0 \rightarrow w(\tau)$.  Theoretically speaking, 
one can develop the stability theory
for the Strang-splitting approximation
\begin{align}
u^{n+1} = \mathcal S_{\mathcal L} (\frac {\tau}2)
\mathcal S_{\mathcal N}^{(\mathrm{LOG})} (\tau) 
\mathcal S_{\mathcal L}(\frac {\tau}2) u^n.
\end{align}
However on the practical side there is a serious issue.  Namely in stark contrast
to the polynomial case, the system \eqref{eq3-3} does not admit an explicit 
solution formula. In yet other words, the solution operator $\mathcal S_{\mathcal N}^{(\mathrm{LOG})} (\tau) $ is difficult to implement in practice unless one makes a further
discretization or approximation.  As we shall see momentarily, we shall resolve this problem
by approximating $\mathcal S_{\mathcal N}^{(\mathrm{LOG})} (\tau) $ via a judiciously 
chosen numerical discretization.  We should point it out that the choice of the numerical solver is a
rather subtle and technically involved one, since there are at least two issues to keep in mind for the construction
of the numerical solver:
\begin{enumerate}
\item \underline{$\mathcal O(\tau^3)$-truncation error}. This is to ensure the genuine \emph{Strang}-nature of the scheme. Since the Strang-splitting is a second order in time scheme,  the truncation error must be kept within $\mathcal O(\tau^3)$ for the numerical solver.
\item \underline{Strict phase separation}. The numerical solver needs to preserve a sort of 
maximum principle of the form $|u|\le u_*<1$ to ensure strict phase separation and stability
of the overall scheme.
\end{enumerate}
In what follows we shall define $g(u)$ as
\begin{equation}\label{eq:fu}
g(u) = \theta_c u - \theta\artanh(u).
\end{equation}
The condition $0<\theta<\theta_c$ is always in force. Note that 
\begin{align}
& g^{\prime}(u) = \theta_c - \theta \frac 1 {1-u^2};\\
& g^{\prime\prime}(u) = - \theta  \frac {2u}{(1-u^2)^2}. \label{Eu_3.7}
\end{align}
In particular $g$ is concave on the interval $(0,1)$. Since $g(0)=0$ and $g(1-)=-\infty$,
by using concavity it is not difficult to check that $g$ admits
a unique root in $(0,1)$. Thereby we denote by $u_*$  this unique solution of $g(u) = 0$ in $(0,1)$. 
One can see the left plot of Figure \ref{fig:root} for an example of the profile of $g$.
It is not difficult to check that if $u$ is a smooth solution to \eqref{Eu_3.1} 
satisfying $\| u\|_{\infty} \le u_*$ initially at time zero, then
\begin{align}
\sup_{t>0} \left\| u(t,\cdot)\right\|_\infty \le u_*.
\end{align}
In designing the numerical solver it is of pivotal importance to preserve the maximum principle.

\begin{figure}[!h]
\includegraphics[width = 0.49\textwidth]{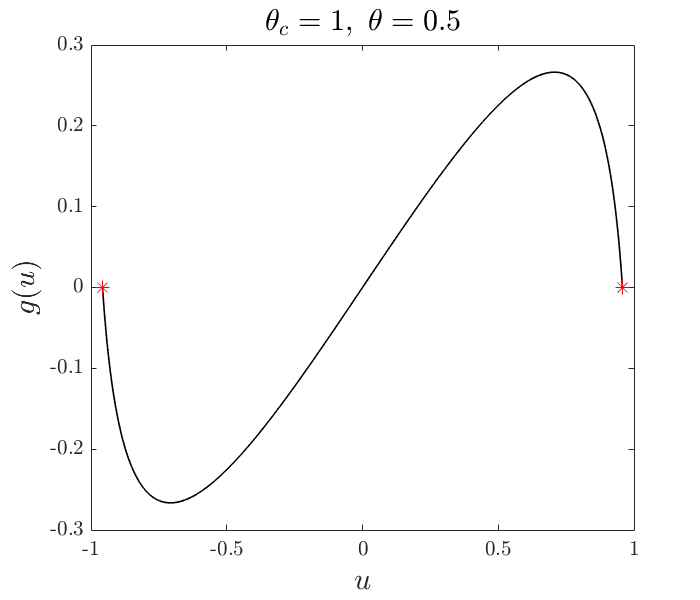}
\includegraphics[width = 0.49\textwidth]{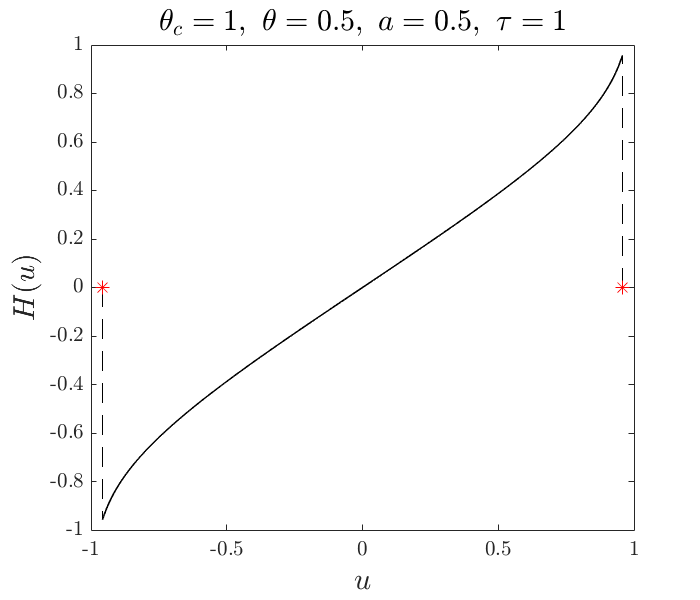}
\caption{$g(u)$ (left) and $H(u)$ (right) w.r.t. $u\in [-u_*,u_*]$, where $\theta_c =1$,  $\theta= \frac12$, and $a = \frac12$. 
The red star markers denote the nonzero roots of $g$, i.e., $-u_*$ and $u_*$. }\label{fig:root}
\end{figure}

\subsection{PR-RK method for approximating $\mathcal S_{\mathcal N}^{(\mathrm{LOG})}(\tau)$}
Diagonally Implicit Runge-Kutta (DIRK) formulae have been widely used for the numerical solution of stiff initial value problems. The simplest method from this class is the second order implicit midpoint method.  To approximate $\mathcal S_{\mathcal N}^{(\mathrm{LOG})}(\tau)v$ for a given function $v$, we shall use the Pareschi and Russo's two-stage diagonally implicit Runge Kutta (PR-RK) method \cite{PR05}  (see Table \ref{tab:PR}) . One should note that under the assumption of uniform Sobolev bounds on the
numerical iterates, the  truncation error involved is guaranteed to be within $\mathcal O(\tau^3)$. 
 \begin{table}[htbp]
        \centering
        \renewcommand\arraystretch{1.75}
       \begin{tabular}{c|cc}
         $a$ & $a$ & $0$\\
         $1-a$& $1-2a$ & $a$\\
         \hline
         & $\frac 12$ & $\frac 12$\\
       \end{tabular}
       \vspace{0.1in}
       \caption{Butcher tableau of Pareschi and Russo's Runge-Kutta method with constant $a$.}\label{tab:PR}
\end{table}

More precisely, for given $v$ we approximate $\mathcal S_{\mathcal N}^{(\mathrm{LOG})}(\tau) v$ via two internal stages (below $a$ is a constant parameter):
\begin{equation}\label{eq:PRRK}
 \begin{aligned}
& u_1 = v+ a\tau g(u_1),\\
& u_2 = v+(1-2a)\tau g(u_1)+a \tau g(u_2),\\
& \mathcal S_{\mathcal N}^{(\mathrm{LOG})}(\tau) v
\approx \widetilde{\mathcal S}_{\mathcal N}(\tau)  v : = v+ \frac{1}{2}\tau g(u_1) + \frac{1}{2}\tau g(u_2).
\end{aligned}
\end{equation}
Since the PR-RK method is a second order method,  it is not difficult to check that if
\begin{align} \label{Eu_3.10}
\max\{ \| v\|_{\infty}, \, \| u_1\|_{\infty}, \, \| u_2\|_{\infty},
\|\widetilde{\mathcal S}_{\mathcal N}(\tau) v \|_{\infty} \} \le u_*<1,
\end{align}
and $v$ has uniform Sobolev bounds, then
\begin{equation}
\mathcal S_{\mathcal N}^{(\mathrm{LOG})}(\tau) v= \widetilde{\mathcal S}_{\mathcal N}(\tau)  v +
\mathcal O(\tau^3).
\end{equation}
We shall verify \eqref{Eu_3.10} later under certain parametric conditions on ($a$, $\theta$, $\theta_c$, $\tau$). 

Concluding from the above discussion, we are led to the following RK based Strang-type splitting
algorithm for \eqref{Eu_3.1}:
\begin{equation} \label{Eu_3.12}
\boxed{
\begin{aligned}
&\qquad u^{n+1} = \mathcal S_{\mathcal L}(\tau/2)  \widetilde{\mathcal S}_{\mathcal N}(\tau) \mathcal S_{\mathcal L}(\tau/2) u^n;  \\
&\text{In terms of  $ \tilde u^n = \mathcal S_{\mathcal L}(\tau/2) u^n$, we have equivalently:}
 \\
&\qquad \tilde u^{n+1} = \mathcal S_{\mathcal L}(\tau) 
 \widetilde{\mathcal S}_{\mathcal N}(\tau) \tilde u^n, 
\end{aligned}}
\end{equation}
where $\widetilde{\mathcal S}_{\mathcal N}(\tau)$ is defined via \eqref{eq:PRRK}.  We tacitly
assume that $\widetilde{\mathcal S}_{\mathcal N}(\tau)$ is the exact solver of
\eqref{eq:PRRK} and do not consider other intermediate numerical errors due to the implicit nature of the
scheme.   The validity of this assumption will be examined in the next subsection.

\subsection{Solvability of \eqref{eq:PRRK}}
Although the first two equations of \eqref{eq:PRRK} are implicit, they can be tackled by the Newton method efficiently with quadratic convergence. In practice only a few iterations are needed
to achieve machine precision. The first two equations in \eqref{eq:PRRK} can be rewritten as
\begin{align}
& H(u_1) -v= 0; \label{eq:H1} \\
& H(u_2) -v -(1-2a)\tau g(u_1) = 0, \label{eq:H2}
\end{align}
where 
\begin{equation}
H(u) \coloneqq u-a\tau g(u) = \left(1-a\tau \theta_c\right) u+a\tau \theta \artanh(u).
\end{equation}
See the right plot of Figure \ref{fig:root} for a graphical illustration of $H(u)$.

To solve \eqref{eq:H1}, we implement the Newton iteration
\begin{align} \label{eq:newton1}
\begin{cases}
u_1^{(k+1)} = u_1^{(k)} - \frac{H(u_1^{(k)})-v}{H'(u_1^{(k)})},\quad k\ge 0;
  \\
u_1^{(0)} = \mathrm{sign}(v) u_*.
\end{cases}
\end{align}
Similarly, we use the following Newton iteration to solve  \eqref{eq:H2}
\begin{align}\label{eq:newton2}
\begin{cases}
u_2^{(k+1)} = u_2^{(k)} - \frac{H(u_2^{(k)})-v-(1-2a)\tau g(u_1) }{H'(u_2^{(k)})}, 
\quad k\ge 0; \\
u_2^{(0)} = \mathrm{sign}(v) u_*.
\end{cases}
\end{align}
\begin{lem}[Unique solvability \& convergence of Newton iterations]\label{lem:conv2}
Assume that $|v|\le u_*$. 
If $0<\tau\le \frac{1}{(3a-1)(\theta_c-\theta)}$ with $a\ge \frac12$, then \eqref{eq:H1} and  \eqref{eq:H2} are uniquely solvable,  and the Newton iterations \eqref{eq:newton1} and \eqref{eq:newton2} converge.
\end{lem}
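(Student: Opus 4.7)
The plan is to first establish that $H:(-1,1)\to\mathbb R$ is a strictly increasing $C^2$ bijection, which yields unique solvability, and then to prove Newton convergence by a monotone-iterate argument exploiting the convexity of $H$ on $(0,1)$ and concavity on $(-1,0)$. I would first compute $H'(u) = (1-a\tau\theta_c) + a\tau\theta/(1-u^2)$ and $H''(u) = 2a\tau\theta u/(1-u^2)^2$. Since $1/(1-u^2)\ge 1$ on $(-1,1)$, the assumption $\tau\le 1/((3a-1)(\theta_c-\theta))$ with $a\ge 1/2$ gives $H'(u)\ge 1-a\tau(\theta_c-\theta)\ge (2a-1)/(3a-1)\ge 0$, strictly positive for $u\ne 0$. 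Thus $H$ is strictly increasing, convex on $(0,1)$, concave on $(-1,0)$, and satisfies $H(u)\to\pm\infty$ as $u\to\pm 1$, so it is a homeomorphism onto $\mathbb R$, which gives unique solvability of \eqref{eq:H1} and \eqref{eq:H2}. Because $g(\pm u_*)=0$ implies $H(\pm u_*)=\pm u_*$, the hypothesis $|v|\le u_*$ forces the root $u_1^*$ of \eqref{eq:H1} into $[-u_*,u_*]$ with $\mathrm{sign}(u_1^*)=\mathrm{sign}(v)$.

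The key technical step would be to show that the effective right-hand side $\hat v:=v+(1-2a)\tau g(u_1)$ of \eqref{eq:H2} also lies in $[-u_*,u_*]$ with $\mathrm{sign}(\hat v)=\mathrm{sign}(v)$, so that $u_2^*$ inhabits the same half-interval as $v$. I would treat $v\ge 0$ (the opposite case is symmetric by the oddness of $g$ and $H$, and $v=0$ is trivial). Eliminating $g(u_1)$ via $a\tau g(u_1)=u_1-v$ rewrites
\[
\hat v \;=\; \frac{(3a-1)v + (1-2a)u_1}{a}.
\]
The concavity of $g$ on $(0,1)$ with $g(0)=0$ yields the linear majorant $g(u)\le g'(0)u=(\theta_c-\theta)u$, whence $u_1\le v/(1-a\tau(\theta_c-\theta))$, and the $\tau$-bound tightens this to $u_1\le \frac{3a-1}{2a-1}v$ for $a>1/2$ (the boundary case $a=1/2$ collapses the coefficient $1-2a$ so that $\hat v=v$ outright). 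Plugging the bound on $u_1$ into the display and using $1-2a\le 0$ yields $\hat v\ge 0$; while $u_1\ge v$ gives $\hat v\le v\le u_*$. Hence $u_2^*\in [0,u_*]$ when $v\ge 0$.

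With the roots localized, Newton convergence would follow from a standard monotone argument. For \eqref{eq:H1} with $v>0$, starting from $u_1^{(0)}=u_*\ge u_1^*>0$ keeps all iterates in the convex region $(0,1)$. By convexity, the tangent line to $H$ at any $u_1^{(k)}\in [u_1^*,u_*]$ lies below $H$, so its intersection with the height-$v$ line occurs at $u_1^{(k+1)}\ge u_1^*$; since $H(u_1^{(k)})\ge v$ and $H'>0$, the Newton increment is non-positive, giving $u_1^{(k+1)}\le u_1^{(k)}$. Then $\{u_1^{(k)}\}$ decreases monotonically to some limit $\bar u\ge u_1^*>0$, and $H'(\bar u)>0$ lets me pass to the limit in the Newton recursion to force $H(\bar u)=v$, i.e., $\bar u=u_1^*$. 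The case $v<0$ is symmetric using concavity on $(-1,0)$, and \eqref{eq:H2} is handled identically with $v$ replaced by $\hat v$ thanks to the sign preservation established above.

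The hard part is the sign preservation $\mathrm{sign}(\hat v)=\mathrm{sign}(v)$: without it, the initial guess $\mathrm{sign}(v)u_*$ could sit on the opposite side of the inflection point at $0$ from $u_2^*$, and a single Newton step could vault the iterate across $0$ into the region of opposite concavity, breaking the monotone-iterate argument. The delicate matching between the bound $\tau\le 1/((3a-1)(\theta_c-\theta))$ and the concavity inequality $g(u)\le (\theta_c-\theta)u$ is exactly what keeps $\hat v$ in the same half-interval as $v$, and is the point where the specific form of the parameter restriction becomes essential.
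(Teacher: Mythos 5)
Your proposal is correct and follows essentially the same route as the paper: compute $H'$ and $H''$, use the restriction $\tau\le \frac{1}{(3a-1)(\theta_c-\theta)}$ together with the concavity bound $g(u)\le(\theta_c-\theta)u$ to get monotonicity of $H$ and to localize the roots in $[0,u_*]$ (your statement that $\hat v=v+(1-2a)\tau g(u_1)\in[0,u_*]$ is exactly equivalent to the paper's sign checks of $H(0)-\hat v$ and $H(u_*)-\hat v$), and then conclude Newton convergence from the standard convexity/monotone-iterate argument starting at $\mathrm{sign}(v)u_*$. Your write-up is in fact somewhat more explicit than the paper's on the Newton convergence step, which the paper only asserts.
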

\begin{proof}

Without loss of generality, we consider the case when $0<v< u_*$.
Direct computation gives
\begin{equation}
\begin{aligned}
&H'(u) = 1 - a\tau\theta_c+ \frac{a\tau\theta}{1-u^2},\quad H''(u) = \frac{2a\tau\theta u}{(1-u^2)^2}.
\end{aligned}
\end{equation}
From the condition $0<\tau\le \frac{1}{(3a-1)(\theta_c-\theta)}$ with $a\ge \frac12$, we have 
$0<\tau \le \frac 1 {a(\theta_c-\theta)}$.  Thus
\begin{align}
H^{\prime}(u) >1- a \tau \theta_c + a\tau \theta  \ge 0, \quad \forall\, 0<u\le u_*.\quad \Rightarrow
\quad H^{\prime}(u)>0, \qquad\forall\, 0<u \le u_*.
\end{align}
It is also clear that $H''(u)>0$ for any $0<u\le u_*$. 

For  \eqref{eq:H1},  using the fact that $f(u_*)=0$ we have
\begin{equation}
\begin{aligned}
& H(u_*)-v= u_*-v > 0, \\
& H(v) -v= -a\tau g(v) < 0.
\end{aligned}
\end{equation}
Therefore, \eqref{eq:H1} is uniquely solvable. 
Given $u_1^{(0)} = u_*$, it follows that the Newton iteration \eqref{eq:newton1} converges to the unique root $u_1$ satisfying $v< u_1 < u_* $.

We turn now to \eqref{eq:H2}.  By using the fact that $g^{\prime}(u) \le \theta_c -\theta$
for $0\le u <1$, we have $g(u) \le (\theta_c-\theta)u$ for any $0<u <u_*$.  By using
\begin{equation}
\begin{aligned}
& \tau{(3a-1)(\theta_c-\theta)} \le 1,\\
& g(u)\le (\theta_c-\theta) u,\quad\forall \,0<u<u_*,
\end{aligned}
\end{equation}
we have 
\begin{equation}
\begin{aligned}
& H(0) -v -(1-2a)\tau g(u_1) = -u_1+(3a-1)\tau g(u_1) \le 0, \\
& H(u_*)-v -(1-2a)\tau g(u_1) = u_*-v+(2a-1)\tau g(u_1)  > 0.
\end{aligned}
\end{equation}
Therefore, \eqref{eq:H2} is uniquely solvable. 
Given $u_2^{(0)} = u_*$, it follows that the Newton iteration \eqref{eq:newton1} converges to some root $0 < u_2 < u_*. $
\end{proof}
\subsection{The maximum principle}
In this subsection, we show that the RK-based Strang-splitting method
\eqref{Eu_3.12} preserves the maximum principle.  

\begin{thm}[Maximum principle]\label{thm:max}
Denote by $u_*$  the unique root of $g(u)=0$ in $(0,1)$. 
If $a\ge 1+\frac{\sqrt{2}}{2}$, $0<\tau\le \frac{1}{(3a-1)(\theta_c-\theta)}$, and $\|\tilde u^n\|_\infty\le u_{*}$, then 
\begin{align}
\| \widetilde{\mathcal S}_{\mathcal N}(\tau) \tilde u^n \|_{\infty} \le u_*,
\end{align}
where $\widetilde{\mathcal S}_{\mathcal N}(\tau)$ was defined in \eqref{eq:PRRK}.
It follows that
\begin{align}
&\|\tilde u^{n+1}\|_\infty = \| \mathcal S_{\mathcal L}(\tau)  \widetilde{\mathcal S}_{\mathcal N}(\tau) \tilde u^n\|_{\infty} \le u_{*}; \\
& \| u^{n+1} \|_{\infty} = \| \mathcal S_{\mathcal L}(\tau/2)  \widetilde{\mathcal S}_{\mathcal N}(\tau) \tilde u^n \|_{\infty} \le u_*.
\end{align}
\end{thm}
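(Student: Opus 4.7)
The plan is to reduce the desired pointwise bound to a single algebraic fact at each point, and then to show that the output of the PR-RK stage is in fact a \emph{convex combination} of the inputs $v$, $u_1$, $u_2$, so that it inherits the bound $|\cdot|\le u_*$ directly from Lemma \ref{lem:conv2}. First I would note that the Newton iterations and the identities in \eqref{eq:PRRK} involve no spatial derivatives, so everything reduces to a pointwise statement at each $x\in\mathbb T^d$. By the odd symmetry $g(-u)=-g(u)$ of the logarithmic nonlinearity and the resulting equivariance of \eqref{eq:PRRK} under $v\mapsto -v$, one may assume without loss of generality that $v\ge 0$ at the given point; then Lemma \ref{lem:conv2} gives $0\le u_2\le u_1\le u_*$, and in particular $g(u_1),g(u_2)\ge 0$.

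The heart of the proof is an algebraic identity. Solving the first stage for $\tau g(u_1)=(u_1-v)/a$ and then eliminating $g(u_1)$ from the second stage to obtain $\tau g(u_2)=(u_2-v)/a+(2a-1)(u_1-v)/a^2$, and substituting into $\widetilde{\mathcal S}_{\mathcal N}(\tau) v=v+\tfrac{\tau}{2}g(u_1)+\tfrac{\tau}{2}g(u_2)$, a direct collection of terms yields
\begin{equation*}
\widetilde{\mathcal S}_{\mathcal N}(\tau) v \;=\; \frac{2a^2-4a+1}{2a^2}\, v \;+\; \frac{3a-1}{2a^2}\, u_1 \;+\; \frac{1}{2a}\, u_2.
\end{equation*}
A quick check shows that the three coefficients sum to $1$. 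The coefficients of $u_1$ and $u_2$ are manifestly positive whenever $a\ge 1/2$. The coefficient of $v$ is nonnegative exactly when $2a^2-4a+1\ge 0$, and since the roots of this quadratic in $a$ are precisely $1\pm\tfrac{\sqrt 2}{2}$, the hypothesis $a\ge 1+\tfrac{\sqrt 2}{2}$ is \emph{tight} for ensuring nonnegativity. Thus $\widetilde{\mathcal S}_{\mathcal N}(\tau) v$ is a genuine convex combination of $v,u_1,u_2\in[0,u_*]$, and so lies in $[0,u_*]$. Combined with the symmetric case $v\le 0$ this yields the pointwise bound $|\widetilde{\mathcal S}_{\mathcal N}(\tau)\tilde u^n|\le u_*$.

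To promote this to the two displayed conclusions about $\tilde u^{n+1}$ and $u^{n+1}$, I would invoke the standard maximum principle for the heat semigroup: on $\mathbb T^d$, $e^{\varepsilon^2 s\Delta}$ is represented by a nonnegative kernel of unit mass and hence is an $L^\infty$ contraction for every $s>0$. Applying this with $s=\tau$ (resp.\ $s=\tau/2$) to $\widetilde{\mathcal S}_{\mathcal N}(\tau)\tilde u^n$ gives $\|\tilde u^{n+1}\|_\infty\le u_*$ (resp.\ $\|u^{n+1}\|_\infty\le u_*$), closing the argument.

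The main obstacle is \emph{finding} the convex combination representation. More naive routes run into trouble: direct estimates of $g(u_1)+g(u_2)$ via the concavity bound $g(u)\le(\theta_c-\theta)u$ on $[0,u_*]$ are off by a factor that blows up as $v\uparrow u_*$, and trying to show monotonicity $d\widetilde{\mathcal S}_{\mathcal N}/dv\ge 0$ is complicated by the fact that $g'$ changes sign inside $(0,u_*)$. What makes the clean identity above work is the arithmetic coincidence that the coefficient $2a^2-4a+1$ of $v$ has its larger root at exactly $a=1+\tfrac{\sqrt 2}{2}$ — this is where the otherwise mysterious hypothesis on $a$ enters, and identifying this is the crucial step.
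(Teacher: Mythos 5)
Your proposal is correct and follows essentially the same route as the paper: both reduce to a pointwise statement, use Lemma \ref{lem:conv2} to place $u_1,u_2$ in $[0,u_*]$ (after the symmetry reduction to $v\ge 0$), and derive exactly the same representation $\widetilde{\mathcal S}_{\mathcal N}(\tau)v=\tfrac{2a^2-4a+1}{2a^2}v+\tfrac{3a-1}{2a^2}u_1+\tfrac1{2a}u_2$ as a convex combination, with $a\ge 1+\tfrac{\sqrt2}{2}$ entering precisely to make the coefficient of $v$ nonnegative, followed by the $L^\infty$-contraction property of the heat semigroup.
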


\begin{proof}
It suffices for us to treat $\tilde u^n$ as a real number. 
With no loss we  consider the case when $0<\tilde u^n<u_*$. 
As $a\ge 1+\frac{\sqrt{2}}{2}$, we have shown in the proofs of Lemma \ref{lem:conv2} that  
\begin{equation}
\tilde u^n < u_1< u_*\quad\mbox{and}\quad 0<u_2<u_*.
\end{equation}
By \eqref{eq:PRRK}, we have
\begin{align}
 \tau g(u_1) &= \frac 1a (u_1 -\tilde u^n); \\
 \tau g(u_2)  &= \frac 1a ( u_2 -\tilde u^n ) - \frac 1a (1-2a) \tau g(u_1)
= \frac 1a (u_2- \tilde u^n ) -\frac 1{a^2} (1-2a)(u_1-\tilde u^n) \notag \\
&=(\frac 1 {a^2} -\frac 3a) \tilde u^n + \frac 1 a u_2 +\frac{2a-1}{a^2} u_1.
\end{align}
Since $a\ge 1+\frac{\sqrt{2}}{2}$,  it is not difficult to check that the following inequality holds
\begin{equation}
\frac{3}{2a}-\frac{1}{2a^2}\ge 0\quad\mbox{and}\quad 1-\frac{2}{a}+\frac{1}{2a^2}\ge 0.
\end{equation}
Consequently we have
\begin{equation}
\begin{aligned}
& \widetilde{\mathcal S}_{\mathcal N}(\tau)  \tilde u^n   = \tilde u^n + \frac{1}{2}\tau g(u_1) + \frac{1}{2}\tau g(u_2) \\
& \qquad \quad=  \frac1{2a} u_2 + \left( \frac{3}{2a}-\frac{1}{2a^2}\right) u_1 + \left(1-\frac{2}{a}+\frac{1}{2a^2}\right) \tilde u^n< u_*,\\
& \widetilde{\mathcal S}_{\mathcal N}(\tau)  \tilde u^n   = \tilde u^n  + \frac{1}{2}\tau g(u_1) + \frac{1}{2}\tau g(u_2) > \tilde u^n.
\end{aligned}
\end{equation}
It follows that 
$\|\widetilde{\mathcal S}_{\mathcal N}(\tau)  \tilde u^n\|_\infty < u_*.$
More generally, if $\|\tilde u^n\|_\infty\le u_{*}$, then
\begin{equation}
\|  \widetilde{\mathcal S}_{\mathcal N}(\tau) \tilde u^n \|_\infty \le u_*.
\end{equation}
Thus
\begin{equation}
\|\tilde u^{n+1} \|_\infty = \|\mathcal S_{\mathcal L}(\tau)  \widetilde{\mathcal S}_{\mathcal N}(\tau) \tilde u^n\|_\infty \le \|\widetilde{\mathcal S}_{\mathcal N}(\tau) \tilde u^n\|_\infty \le u_*.
\end{equation}
The bound for $u^{n+1}$ follows similarly.
\end{proof}

\subsection{Modified energy dissipation}
By \eqref{Eu_3.12}, we  have
\begin{equation}
e^{-\eps^2\tau\Delta}\tilde u^{n+1} = \widetilde{\mathcal S}_{\mathcal N}(\tau)\tilde u^n.
\end{equation}
Clearly 
\begin{equation}\label{Eu_3.34}
\frac 1{\tau}\left(e^{-\eps^2\tau\Delta}-1\right)\tilde u^{n+1}  + \frac 1 {\tau} (\tilde u^{n+1}-\tilde u^n)=\frac 1{\tau} \Bigl(  \widetilde{\mathcal S}_{\mathcal N}(\tau)\tilde u^n -\tilde u^n \Bigr).
\end{equation}
By \eqref{eq:PRRK}, we have
\begin{equation}
\frac 1{\tau} \Bigl( \widetilde{\mathcal S}_{\mathcal N}(\tau)\tilde u^n -\tilde u^n \Bigr)=  \frac{1}{2} g(u_1) + \frac{1}{2} g(u_2).
\end{equation}
The strategy is to rewrite the RHS above as  $-\overline{F}^{\prime}(\tilde u^n)$, where 
$\overline{F}$ is a one-variable function serving as the potential energy function. 
For this we need to introduce some notation.

Recall that in \eqref{eq:PRRK}, $u_1$ and $u_2$ are implicitly defined as a function of $u$
for given $u$.  For convenience of notation, 
we regard $u_1=u_1(u)$, $u_2=u_2(u) $ as two smooth functions of $u\in [-u_*, u_*]$ solving
\begin{equation}\label{eq:u12}
u_1(u) =  u+ a\tau g(u_1 (u) ),\quad
u_2(u) = u + (1-2a) \tau g(u_1 (u) )+a\tau g(u_2(u) ).
\end{equation}
We define  $\overline F =\overline{F}(u)$ as the unique smooth function satisfying 
\begin{equation}\label{eq:F_log}
\overline F'(u) = \frac d{du} \overline{F}(u)= -\frac12 g(u_1(u) ) - \frac{1}{2} g(u_2 (u) ),\quad\mbox{with } \overline F(0) = 0.
\end{equation}
The normalization $\overline F(0)=0$ is chosen in analogy with \eqref{eq:logpot} since 
$F_{\mathrm{fh}}(0)=0$.  With the help of $\overline F$, we rewrite  \eqref{Eu_3.34}
 as
\begin{equation}\label{Eu_3.38}
\frac 1{\tau}\left(e^{-\eps^2\tau\Delta}-1\right)\tilde u^{n+1}  + \frac 1 {\tau} (\tilde u^{n+1}-\tilde u^n)=- \overline F^{\prime} ( \tilde u^n ).
\end{equation}

\begin{thm}[Modified energy dissipation]\label{thm:ene2}
Assume $u^0 \in H^1(\mathbb T^d)$ and  $\| u^0 \|_{\infty} \le u_*$ where $u_*$ is the unique root of $g(u)=0$ in
$(0,1)$. 
When $ a\ge 1+\frac{\sqrt 2}{2}$ and $0<\tau\le \frac{1}{3a(\theta_c-\theta)}$, 
 the RK-based Strang splitting method \eqref{Eu_3.12} preserves the maximum principle and
 the modified energy dissipation property, namely
\begin{align} 
& \sup_{n\ge 1} \max\{ \| u^n \|_{\infty}, \; \| \tilde u^n \|_{\infty} \} \le u_*; \\
 &\overline E^{n+1} \le \overline E^n,\quad \forall\; n\ge 1;  \label{Eu_3.39}\\
&\overline E^n \coloneqq \frac1{2}\left\langle \frac{1}{\tau}(e^{-\eps^2\tau \Delta} -1) \tilde u^n, \tilde u^n \right\rangle + \left\langle\overline F(\tilde u^n),1\right\rangle,
\end{align}
where  $\tilde u^n  = \mathcal S_{\mathcal L}(\tau/2) u^{n}$ and $\overline F$ is defined by \eqref{eq:F_log}.
\end{thm}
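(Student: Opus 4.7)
\medskip

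\noindent\textbf{Proof proposal for Theorem \ref{thm:ene2}.}
The plan is to mimic the argument used in the polynomial case (Section 2.2), with the key identity \eqref{Eu_3.38} playing the role of \eqref{eu_2.7}. The maximum principle bound is immediate from Theorem \ref{thm:max} (note that $\frac{1}{3a(\theta_c-\theta)}\le \frac{1}{(3a-1)(\theta_c-\theta)}$, so Lemma \ref{lem:conv2} applies). For the energy decay, I would first multiply \eqref{Eu_3.38} by $(\tilde u^{n+1}-\tilde u^n)$ and integrate over $\mathbb T^d$. For the linear contribution I would use the same polarization identity as in \eqref{eu_2.10}, namely
\begin{equation*}
\frac{1}{\tau}\langle(e^{-\varepsilon^2\tau\Delta}-1)\tilde u^{n+1},\tilde u^{n+1}-\tilde u^n\rangle
= \frac{1}{2\tau}\bigl(\langle Q\tilde u^{n+1},\tilde u^{n+1}\rangle-\langle Q\tilde u^n,\tilde u^n\rangle+\langle Q(u^{n+1}-u^n),u^{n+1}-u^n\rangle\bigr),
\end{equation*}
where $Q=e^{-\varepsilon^2\tau\Delta}-1$ (and I would switch to $1-e^{\varepsilon^2\tau\Delta}$ acting on $u^n$ via the same manipulation as \eqref{eu_2.10}). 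For the nonlinear contribution I would use the Taylor expansion
$\overline F(\tilde u^{n+1})=\overline F(\tilde u^n)+\overline F'(\tilde u^n)(\tilde u^{n+1}-\tilde u^n)+\tfrac12\overline F''(\xi^n)(\tilde u^{n+1}-\tilde u^n)^2$. Combining these yields the clean identity
\begin{equation*}
\overline E^{n+1}-\overline E^n
= -\frac{1}{2\tau}\langle(1-e^{\varepsilon^2\tau\Delta})(u^{n+1}-u^n),u^{n+1}-u^n\rangle
-\left\langle\left(\frac{1}{\tau}-\frac{1}{2}\overline F''(\xi^n)\right)(\tilde u^{n+1}-\tilde u^n)^2,1\right\rangle.
\end{equation*}
The first term on the right is manifestly nonpositive by Fourier analysis (as in the remark after Theorem \ref{thm1}).

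\medskip

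The main obstacle, and the step requiring genuine work, is verifying the pointwise inequality
$\overline F''(\xi)\le 2/\tau$ for all $\xi\in[-u_*,u_*]$. Unlike the polynomial case, $\overline F''$ has no closed form. To compute it I would differentiate the implicit relations \eqref{eq:u12} to obtain
\begin{equation*}
u_1'(u)=\frac{1}{1-a\tau g'(u_1)},\qquad
u_2'(u)=\frac{1+(1-2a)\tau g'(u_1)u_1'(u)}{1-a\tau g'(u_2)},
\end{equation*}
and then $\overline F''(u)=-\tfrac12 g'(u_1)u_1'(u)-\tfrac12 g'(u_2)u_2'(u)$. Since the maximum principle applied inside the RK stages (established in the proof of Theorem \ref{thm:max}) gives $|u_1|,|u_2|\le u_*$, we have $g'(u_i)=\theta_c-\theta/(1-u_i^2)\le\theta_c-\theta$, so $a\tau g'(u_i)\le a\tau(\theta_c-\theta)\le 1/3$ under the CFL-type assumption $\tau\le 1/(3a(\theta_c-\theta))$, keeping both denominators bounded below by $2/3$. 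Although $g'(u_i)$ itself can be very negative when $|u_i|$ is close to $1$, the corresponding denominator $1-a\tau g'(u_i)$ then becomes correspondingly large, which allows a structural cancellation. I expect to be able to show, after algebraic simplification, that the condition $a\ge 1+\sqrt{2}/2$ (the same combinatorial condition already used in Theorem \ref{thm:max} to write $\widetilde{\mathcal S}_{\mathcal N}(\tau)\tilde u^n$ as a positive convex combination of $\tilde u^n,u_1,u_2$) provides exactly the sign structure needed to control $\overline F''$.

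\medskip

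Once the pointwise bound $\overline F''(\xi)\le 2/\tau$ is established, the display above immediately yields $\overline E^{n+1}\le\overline E^n$, finishing the proof. I would expect the bulk of the technical work to lie in the algebraic estimate of $\overline F''$; the identity derivation and polarization step are entirely parallel to Section 2.2 and should proceed essentially verbatim.
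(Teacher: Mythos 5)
Your proposal follows the paper's proof essentially verbatim: the identity \eqref{Eu_3.38}, the polarization of the linear term, the Taylor expansion of $\overline F$ with a second-order remainder, and the implicit differentiation of \eqref{eq:u12} are all exactly the paper's steps (your formula for $u_2'$ agrees with the paper's form $\bigl((3-\tfrac1a)-(2-\tfrac1a)u_1'\bigr)/(1-a\tau g'(u_2))$ after substituting $\tau g'(u_1)=\tfrac1a(1-1/u_1')$). The one step you defer, $\overline F''(\xi)\le 2/\tau$, closes in two lines and does not in fact hinge on $a\ge 1+\tfrac{\sqrt 2}{2}$: writing $-\tfrac12 g'(u_i)/(1-a\tau g'(u_i))=-1/\bigl(\tfrac{2}{g'(u_i)}-2a\tau\bigr)$ one checks this is $\le\tfrac{1}{2a\tau}$ whatever the sign of $g'(u_i)$ (if $g'(u_i)>0$ then $\tau\le\tfrac{1}{3a(\theta_c-\theta)}$ keeps the denominator $\ge 4a\tau>0$ so the term is negative; if $g'(u_i)\le 0$ the denominator is $\le -2a\tau$), and since $0<u_1'\le\tfrac32$ forces the factor $(3-\tfrac1a)-(2-\tfrac1a)u_1'$ into $(0,\,3-\tfrac1a]$, one obtains $\overline F''\le\tfrac{4-1/a}{2a\tau}\le\tfrac{2}{\tau}$, the last inequality being $(2a-1)^2\ge 0$; the condition $a\ge 1+\tfrac{\sqrt 2}{2}$ enters only through the maximum principle (keeping $u_1,u_2$ in $[-u_*,u_*]$ so that $g'(u_i)\le\theta_c-\theta$ is available and the stages are solvable), exactly as the paper's own remark after the theorem points out.
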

\begin{rem}
For the energy dissipation to hold, formally speaking the argument only requires the weaker
condition $a\ge \frac 1 2$ and $0<\tau\le \frac{1}{3a(\theta_c-\theta)}$. However in order
to have solvability of our RK-based scheme for all $n\ge 1$, we need to impose the stronger condition on the parameter
$a$ in order to preserve the maximum principle.
\end{rem}
\begin{proof}
We only need to show  \eqref{Eu_3.39}.
Direct computation gives
\begin{equation}\label{eq:fs}
g'(u)= \theta_c - \frac{\theta}{1-u^2} \le \theta_c-\theta, \quad \forall u\in [-u_*, u_*].
\end{equation}
By \eqref{eq:F_log}, we have
\begin{equation}\label{eq:F_log2}
\overline F''(u) = -\frac12 g'(u_1) u_1'(u) - \frac{1}{2} g'(u_2) u_2'(u), \quad \forall u\in [-u_*,u_*].
\end{equation}
Taking the derivative of two equations in \eqref{eq:u12} w.r.t. $u$, we have
\begin{equation}\label{eq:u12'}
\begin{aligned}
& u_1'(u) = \frac{1}{1-a\tau g'(u_1)},\\
& u_2'(u) = \frac{(3-\frac1 a)-(2-\frac1 a)u_1'(u)}{1-a\tau g'(u_2)}.
\end{aligned}
\end{equation}
By \eqref{eq:fs} and the assumption $0<\tau\le \frac{1}{3 a (\theta_c-\theta)}$, we have 
 \begin{equation}\label{eq:f'u}
g'(u)\le \theta_c-\theta  \le \frac{1}{3 a\tau},  \quad \forall\, u\in [-u_*, u_*].
 \end{equation}
This yields
\begin{equation}\label{ineq:u1'}
 0<u_1'(u)\le \frac3 2\quad \mbox{and}\quad \left(3-\frac1 a\right)-\left(2-\frac1 a\right)u_1'(u)> 0. 
 \end{equation}
 Substituting \eqref{eq:u12'} into \eqref{eq:F_log2},  we then have
 \begin{equation}\label{eq:F''}
 \begin{aligned}
 \overline F''(u) &= -\frac{1}{\frac{2}{g'(u_1)}-2a\tau} -  \frac{(3-\frac1 a)-(2-\frac1 a)u_1'(u)}{\frac{2}{g'(u_2)}-2 a\tau}\\
 & \le \frac{1}{2a\tau}+ \frac{3-\frac 1 a}{2a\tau} 
= \frac{4-\frac 1 a}{2a\tau} \le \frac2\tau,\qquad\forall u\in [-u_*,u_*].
 \end{aligned}
 \end{equation}

Multiplying \eqref{Eu_3.38}  with $(\tilde u^{n+1}-\tilde u^n)$ and integrating over $\mathbb T^d$, we  obtain 
\begin{equation}\label{ineq:decay}
\begin{aligned}
& \overline E^{n+1} - \overline E^n  \le  -\left\langle\left(\frac1\tau-\frac12 \overline F''(\xi^n)\right) (\tilde u^{n+1}-\tilde u^n)^2,1\right\rangle \le 0,
\end{aligned}
\end{equation}
where $-u_*\le \xi^n\le u_*$ is some function between $\tilde u^n$ and $\tilde u^{n+1}$.
\end{proof}

\begin{rem}
The restrictions on ($a$, $\tau$) in Theorem \ref{thm:max} and \ref{thm:ene2} do not depend on $\sup_{|u| \le u_*} |g'(u)| $ which could be very large. 
\end{rem}

We now complete the proof of Theorem \ref{thm2}.
\begin{proof}[Proof of Theorem \ref{thm2}]
The first two statements follow from Theorem \ref{thm:ene2}. 
The rest of the statements can be proved along similar lines as in Theorem 
\ref{thm1}. We omit the details.
\end{proof}

\section{Numerical results}\label{sect4}
In this section, we implement the Strang splitting methods on the AC equation \eqref{1.1} and \eqref{Eu_3.1} with periodic boundary conditions. 

{\bf Space discretization. }
We use the spectral method to compute the linear solution operator $\mathcal S_{\mathcal L}(\tau)$ in the Strang splitting method.
Suppose that $\Omega = [0, L]^2$ with $L>0$ is a periodic torus. 
We can compute $e^{t\Delta_h} u$ via fast Fourier transform (FFT) as follows. 
Denote by $\Delta_h$ the discrete Laplacian operator with $h= \frac L N$ and $N\ge 1$ being an integer.
We use the following convention for FFT: 
\begin{align}
& u_{\mathbf j} = \sum_{-\frac N2 < k_x,k_y \le \frac N2} \hat u_{\mathbf k} \,e^{\frac{2\pi i}{N} \mathbf j\cdot\mathbf k }, \\
& \hat u_{\mathbf k} = \frac 1 {N^2} \sum_{j_x,j_y=0}^{N-1} u_{\mathbf j} \, e^{-\frac{2\pi i}{N} \mathbf j\cdot\mathbf k},
\end{align}
where $\mathbf j = (j_x,j_y)$ and $\mathbf k = (k_x,k_y)$. 
 We have
\begin{align}
(\Delta_h u)_{\mathbf j} = \frac {u_{j_x,j_y+1} + u_{j_x,j_y-1}+u_{j_x+1,j_y}+u_{j_x-1,j_y} -4 u_{j_x,j_y} } {h^2}.
\end{align}
Clearly
\begin{align}
(\widehat{\Delta_h u})_{\mathbf k} = w_{\mathbf k} \hat u_{\mathbf k},\quad -\frac N2 < k_x, k_y \le \frac N2,
\end{align}
where
\begin{align}
w_{\mathbf k} = h^{-2} \left( 2 \cos\left(2\pi \frac {k_x} N\right) +2
\cos\left(2\pi \frac {k_y} N\right) -4\right).
\end{align}
It follows that
\begin{align}
\widehat{(\mathcal S_{\mathcal L}(\tau) u)}_{\mathbf k}  = \widehat{(e^{\eps^2\tau\Delta_h}  u)}_{\mathbf k} = e^{\eps^2\tau w_{\mathbf k} } \hat u_{\mathbf k},\quad -\frac N2 < k_x, k_y \le \frac N2.
\end{align}
Taking the inverse fast Fourier transform (IFFT) then produces the numerical values of $\mathcal S_{\mathcal L}(\tau) u$ on the real side.

Similarly we compute $e^{-\eps^2\tau \Delta} u$ in the definition of modified energy  by taking the IFFT of the following equation
\begin{equation}
\widehat{(e^{-\eps^2\tau\Delta_h}  u)}_{\mathbf k} = e^{-\eps^2\tau w_{\mathbf k} } \hat u_{\mathbf k},\quad -\frac N2 < k_x, k_y \le \frac N2.
\end{equation}

In the following numerical tests, we use the above spectral method for space discretization. 

\subsection{2D Allen--Cahn with polynomial potential}
Consider the AC equation \eqref{1.1} with polynomial potential, where $\eps = 0.1$ and the $2\pi$-periodic domain $\Omega = [0,2\pi]^2$. We take the initial data $u_0$ as
\begin{equation}
u_0(x,y) = 0.05\sin(x)\sin(y).
\end{equation}
We use $N\times N =  512\times 512$ Fourier modes for the space discretization. 

Since the exact PDE solution is not available, we take a small splitting step  $\tau = 10^{-4}$, to obtain an ``almost exact'' solution $u_{\mathrm {ex}}$ at time $T = 20$.
Then, we take several different splitting steps $\tau = \frac{1}{10}\times2^{-k}$ with $k = 0,1,\ldots,4$ and obtain corresponding numerical solutions at $T=20$. 
The $\ell_2$-errors between these solutions and the ``almost exact'' solution are summarized in Table \ref{tab2}. Reassuringly,  it is observed that the convergence rate is about $2$, i.e. the scheme
has second order in-time accuracy.

\begin{table}[htb!]
\renewcommand\arraystretch{1.5}
\begin{center}
\def\temptablewidth{0.95\textwidth}
\caption{$\ell_2$-errors of numerical solutions to the AC equation \eqref{1.1} with polynomial potential at time $T = 20$ for different splitting steps.}\label{tab2}
\vspace{-0.2in}
{\rule{\temptablewidth}{1pt}}
\begin{tabular*}{\temptablewidth}{@{\extracolsep{\fill}}cccccc}
   $\tau$   &$\frac1{10}$     &$\frac1{20}$     &$\frac1{40}$
   &$\frac1{80}$  & $\frac1{160}$ \\  \hline
  $\ell_2$-error   & $9.367\times 10^{-4}$   & $2.345\times 10^{-4}$   & $5.865\times 10^{-5}$ & $1.466\times 10^{-5}$ & $3.665\times 10^{-6}$\\[3pt]
rate  & -- &$1.998$ &$1.999$ &$2.000$ & $2.000$
\end{tabular*}
{\rule{\temptablewidth}{1pt}}
\end{center}
\end{table}
   
In Figure \ref{fig:E_poly}  we plot the standard energy versus the modified energy as a function of time. The time step is  $\tau = 0.01$.
It is observed that the standard energy and the modified energy coincide approximately, and
they both decay monotonically in time.

\begin{figure}[!]
\includegraphics[trim = {0in 0.in 0.5in 0.2in},clip,width = 0.5\textwidth]{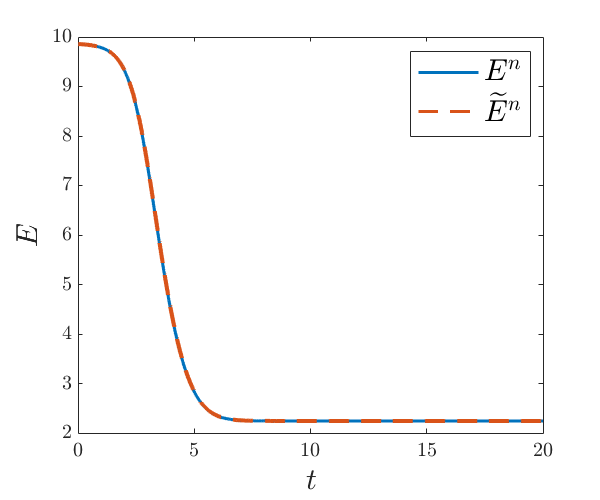}
\caption{Standard energy $E^n$ and modified energy $\widetilde E^n$ w.r.t. time for the Strang splitting method, with splitting step $\tau = 0.01$ and number of Fourier modes $512\times 512$.}\label{fig:E_poly}
\end{figure}

\subsection{2D AC with the logarithmic potential}
Consider the AC equation \eqref{Eu_3.1} with logarithmic potential, where $\eps = 0.01$, $\theta_c = 1$ and $\theta = \frac14$. 
The spatial domain is the two-dimensional $2\pi$-periodic torus  $\Omega = [0,2\pi]^2$.
We take the initial condition $u_0$ as
\begin{equation}
u_0(x,y) = 0.5 \left[\chi \left((x-\pi)^2+(y-\pi)^2\le 1.2\right) - 0.5\right],
\end{equation}
where $\chi$ is the characteristic function.
We employ the RK-based Strang splitting method \eqref{Eu_3.12} to solve this equation. The tolerance threshold of the Newton iterative solver is set to be $10^{-12}$.
We use the standard Fourier spectral method with $512\times 512$ Fourier modes for the space discretization.

As a first step, we test the convergence rate of the  RK-based Strang splitting method.  
In Table \ref{tab2}, we show the $\ell_2$-errors of the numerical solution at $T = 1$, where the parameter $a$ in the PR-RK method is set to $a = 1+\frac{\sqrt 2}{2}$.
As before, the ``exact'' solution is taken as the numerical solution when $\tau = 10^{-4}$. 
It can be observed that the convergence order is about $2$.
\begin{table}[htb!]
\renewcommand\arraystretch{1.5}
\begin{center}
\def\temptablewidth{1\textwidth}
\caption{$\ell_2$-errors of numerical solutions at time $T = 1$ to the AC equation with logarithmic potential \eqref{Eu_3.1} for different splitting steps, computed by the RK-based Strang splitting method with $a = 1+\frac{\sqrt 2}{2}$.}\label{tab3}
\vspace{-0.2in}
{\rule{\temptablewidth}{1.1pt}}
\begin{tabular*}{\temptablewidth}{@{\extracolsep{\fill}}cccccc}
   $\tau$   &$\frac1{10}$     &$\frac1{20}$     &$\frac1{40}$
   &$\frac1{80}$  & $\frac1{160}$ \\  \hline
  $\ell_2$-error   & $2.245\times 10^{-2}$   & $4.935\times 10^{-3}$   & $1.160\times 10^{-3}$ & $2.815\times 10^{-4}$ & $6.933\times 10^{-5}$ \\[3pt]
rate  & -- &$2.186$ &$2.088$ &$2.043$ & $2.022$ 
\end{tabular*}
{\rule{\temptablewidth}{1pt}}
\end{center}
\end{table}

Secondly, we test the convergence rate for an interesting case of $a = \frac12+\frac{\sqrt{3}}{6}$, where the PR-RK method in Table \ref{tab:PR} becomes the Crouzeix's third order RK method. 
In this case the approximation error of nonlinear solution operator becomes $\mathcal O(\tau^4)$, i.e.,
\begin{equation}
\mathcal S_{\mathcal N}(\tau) \tilde u^n = \widetilde{\mathcal S}_{\mathcal N}(\tau)  \tilde u^n  +\mathcal O(\tau^4).
\end{equation}
On the other hand, the overall error of the method \eqref{eq:PRRK} is still second order in time.
Interestingly, the numerical results in Table \ref{tab4} show that the convergence rate for $a = \frac12+\frac{\sqrt{3}}{6}$ appears to be higher than the corresponding case of $a = 1+\frac{\sqrt 2}{2}$ in Table \ref{tab3}.  This is probably due to the inaccuracy of the reference solution
which was taken as the $\tau=10^{-4}$-almost exact-solution.

\begin{table}[htb!]
\renewcommand\arraystretch{1.5}
\begin{center}
\def\temptablewidth{1\textwidth}
\caption{$\ell_2$-errors of numerical solutions at time $T = 1$ to the AC equation with logarithmic potential \eqref{Eu_3.1} for different splitting steps, computed by the RK-based Strang splitting method with $a = \frac12+\frac{\sqrt{3}}{6}$.}\label{tab4}
\vspace{-0.2in}
{\rule{\temptablewidth}{1.1pt}}
\begin{tabular*}{\temptablewidth}{@{\extracolsep{\fill}}ccccccc}
   $\tau$   &$\frac1{10}$     &$\frac1{20}$     &$\frac1{40}$
   &$\frac1{80}$  & $\frac1{160}$ & $\frac{1}{320}$\\  \hline
  $\ell_2$-error   & $9.440\times 10^{-5}$   & $1.132\times 10^{-5}$   & $1.392\times 10^{-6}$ & $1.750\times 10^{-7}$ & $2.286\times 10^{-8}$ & $3.302\times 10^{-9}$ \\[3pt]
rate  & -- &$3.060$ &$3.023$ &$2.992$ & $2.936$ & $2.792$
\end{tabular*}
{\rule{\temptablewidth}{1pt}}
\end{center}
\end{table}

Finally, we test  the maximum principle and the energy dissipation of the RK-based Strang splitting method.
We set $a = 1+\frac{\sqrt 2}{2}$ and $\tau = 0.01$, so that the restrictions in Theorem \ref{thm:max} and \ref{thm:ene2} are satisfied. 
Numerical solutions up to $t = 10$ are illustrated in Figure \ref{fig:phase_log}.
It can be observed that $\|u\|_\infty$ is always less than $u_*\approx 0.99933$, i.e., the maximum principle holds.
In Figure \ref{fig:E_log}, we plot the standard energy $E^n$ w.r.t. time which clearly decays
in time.  Note that the modified energy $\overline E^n$ is implicit in this case and is not plotted here. 

\begin{figure}[!]
\includegraphics[trim = {0in 0.8in 0.in 0},clip,width = 0.99\textwidth]{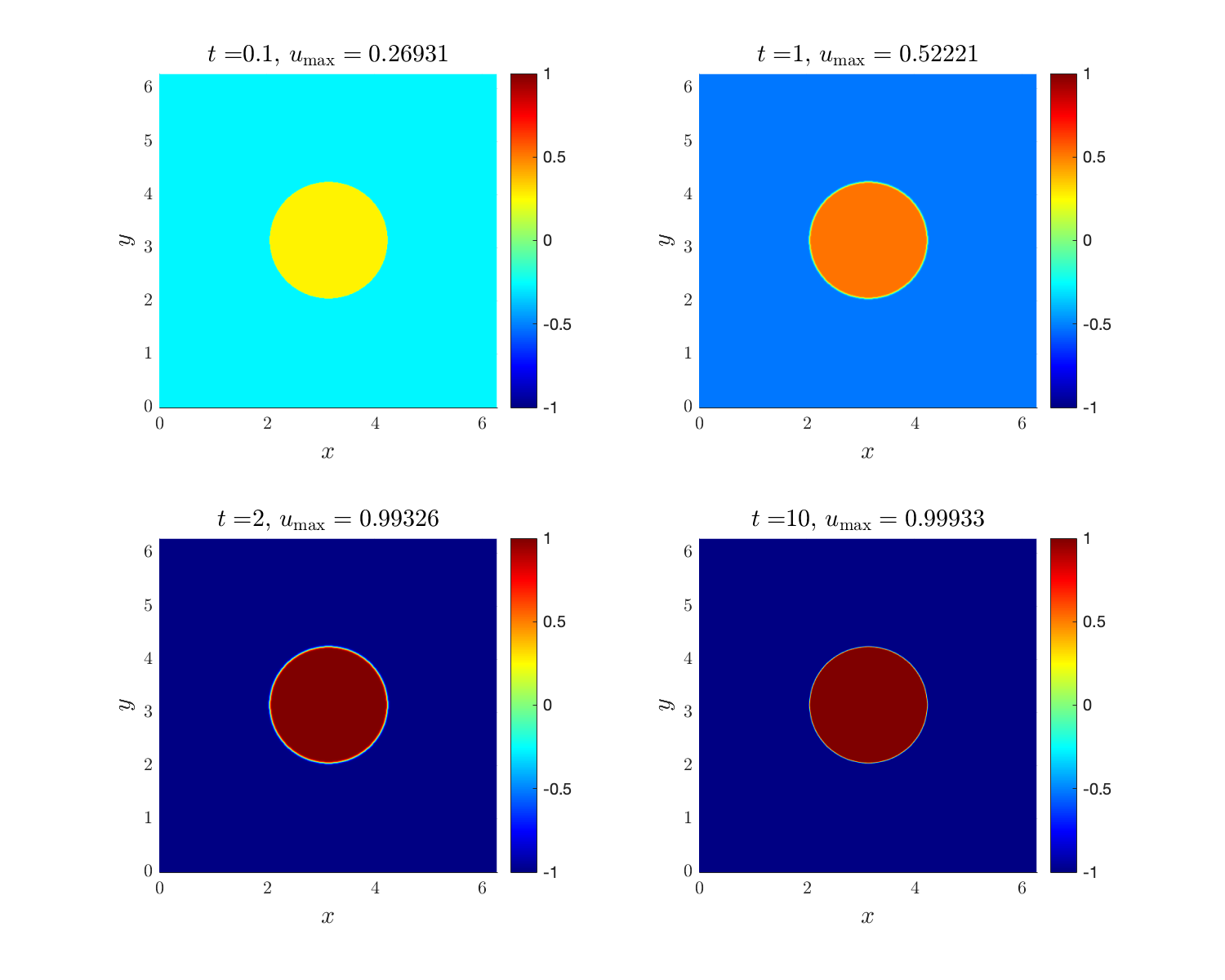}
\caption{Numerical solution to the AC equation with logarithmic potential computed by the RK-based Strang splitting method with splitting step $\tau = 0.01$ and number of Fourier modes $512\times 512$. $u_{\mathrm{max}}$ denotes the maximal absolute value of $u$.}\label{fig:phase_log}
\includegraphics[trim = {0in 0.in 0.5in 0.2in},clip,width = 0.5\textwidth]{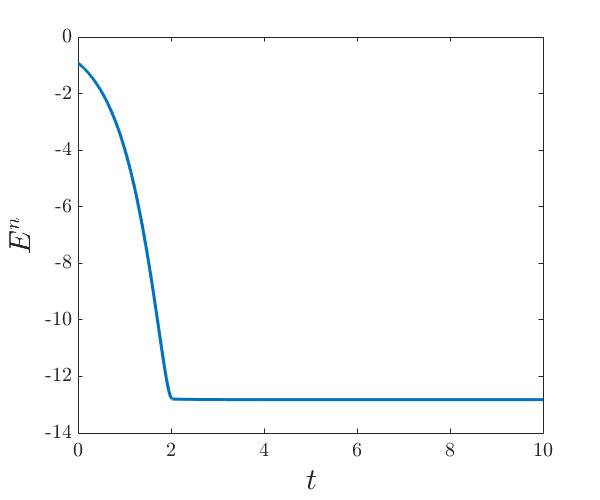}
\caption{Standard energy w.r.t. time for the RK-based Strang splitting method \eqref{Eu_3.12} with splitting step $\tau = 0.01$ and number of Fourier modes $512\times 512$.}\label{fig:E_log}
\end{figure}

\subsection{Seven circles}
{\em Consider the AC equation \eqref{Eu_3.1} with  $\eps = 0.1$, $\theta_c = 1$ and $\theta = \frac14$. 
The domain is the two-dimensional $2\pi$-periodic torus $\Omega = [0,2\pi]^2$.
The initial condition consists of seven circles with centers and radii given in Table \ref{tab:xyr}:
\begin{equation}\label{Eu_4.11}
u_0(x,y) = -1 + \sum_{i=1}^7 f_0\left( \sqrt{(x-x_i)^2+(y-y_i)^2} - r_i\right),
\end{equation}
where
\begin{equation}
f_0(s) = \left\{
\begin{aligned}
& 2 e^{-\varepsilon^2/s^2} &&\mbox{if } s<0,\\
& 0 && \mbox{otherwise.}
\end{aligned}
\right.
\end{equation}
}

\begin{table}[htb!]
\renewcommand\arraystretch{1.3}
\begin{center}
\def\temptablewidth{0.8\textwidth}
\caption{Centers $(x_i,y_i)$ and radii $r_i$ in the initial condition \eqref{Eu_4.11}.}\label{tab:xyr}
{\rule{\temptablewidth}{1.1pt}}
\begin{tabular*}{\temptablewidth}{@{\extracolsep{\fill}}c|ccccccc}
   $i$   &1 & 2     &3   &4  & 5 & 6 & 7 \\  \hline
  $x_i$  & $\pi/2$   & $\pi/4$   & $\pi/2$ & $\pi$ & $3\pi/2$ & $\pi$ & $3\pi/2$ \\[3pt]
$y_i$  & $\pi/2$   & $3\pi/4$   & $5\pi/4$ & $\pi/4$ & $\pi/4$ & $\pi$ & $3\pi/2$ \\[3pt]
$r_i$  & $\pi/5$   & $2\pi/15$   & $2\pi/15$ & $\pi/10$ & $\pi/10$ & $\pi/4$ & $\pi/4$ \\[3pt]
\end{tabular*}
{\rule{\temptablewidth}{1.1pt}}
\end{center}
\end{table}

We  use the RK-based Strang splitting method with $a=1+\frac{\sqrt 2}2$ and $\tau=0.01$ to solve this equation with the Newton iterative solver.
 To achieve mediocre accuracy  the tolerance threshold for the Newton iteration is set as $10^{-12}$ which is close to the machine precision.
We employ the spectral method with $512\times 512$ Fourier modes for the space discretization. 
The evolution of phase field is illustrated in Figure \ref{fig:sol_7circ}, where the annihilation of the circles  take place gradually in time.
The corresponding energy evolution is recorded in Figure \ref{fig:energy_7circ}.

\begin{figure}[!]
\includegraphics[trim = {0in 0.8in 0.in 0},clip,width = 0.99\textwidth]{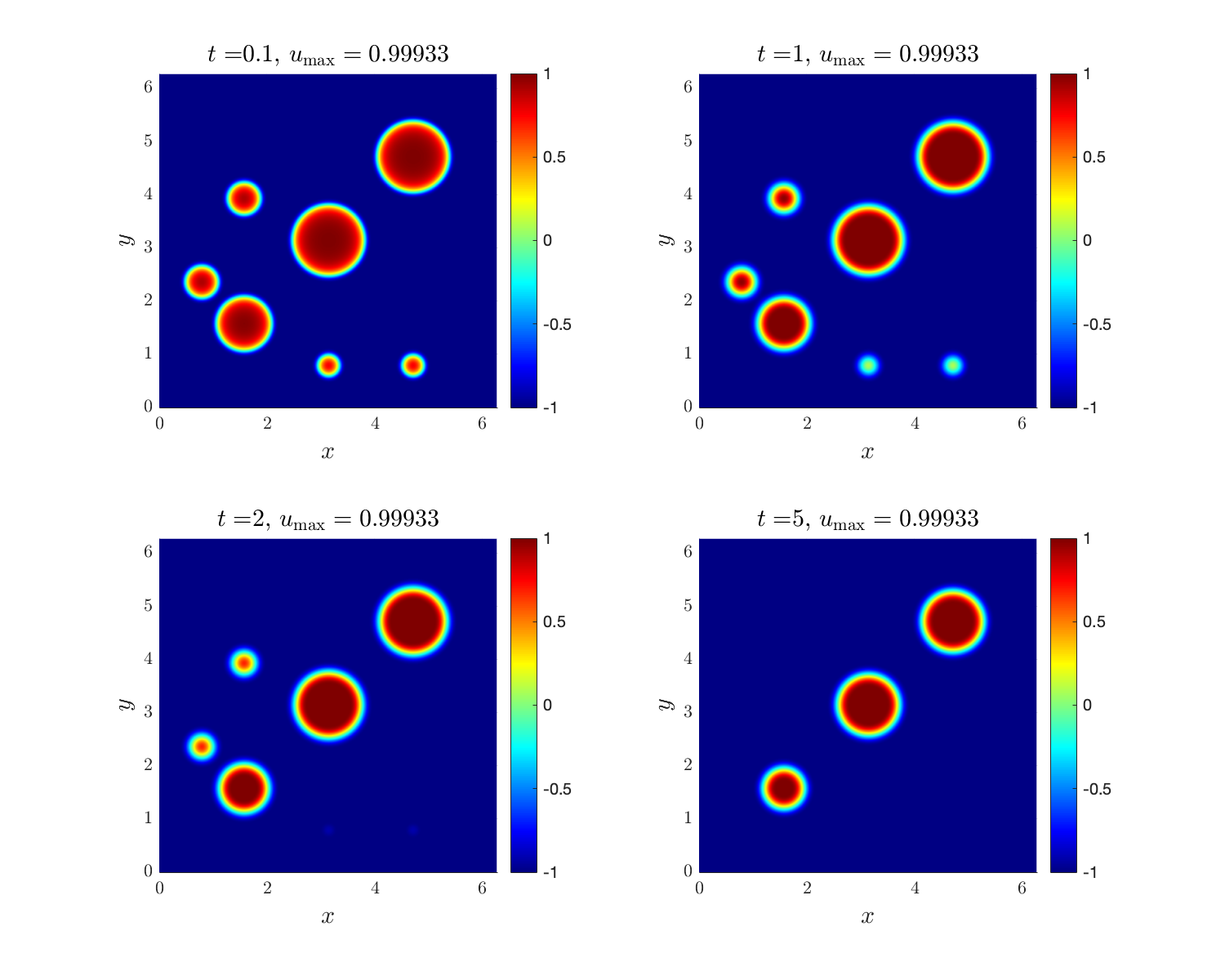}
\caption{Numerical solution of the seven circles example computed by the RK-based Strang splitting method with splitting step $\tau = 0.01$ and number of Fourier modes $512\times 512$. $u_{\mathrm{max}}$ denotes the maximal absolute value of $u$.}\label{fig:sol_7circ}
\includegraphics[trim = {0in 0.in 0.5in 0.2in},clip,width = 0.5\textwidth]{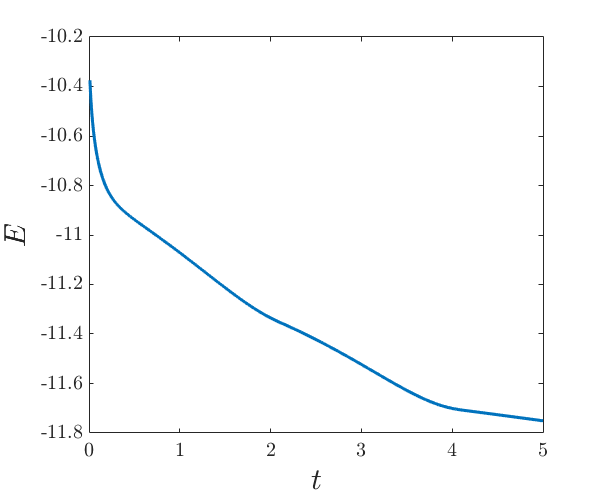}
\caption{Standard energy w.r.t. time for the RK-based Strang splitting method \eqref{Eu_3.12} in the seven circles example  with splitting step $\tau = 0.01$ and number of Fourier modes $512\times 512$.}\label{fig:energy_7circ}
\end{figure}

\section{Conclusion}
In this work we investigated a class of second-order Strang splitting methods for Allen-Cahn equations with polynomial and logarithmic nonlinearities. For the polynomial case we compute both the linear and the nonlinear propagators explicitly.  Unconditional stability is established for any time step $\tau>0$. For a judiciously modified energy which coincides with the classical energy up to $O(\tau)$,
we show strict energy dissipation and obtain uniform control of higher Sobolev norms. 
For the logarithmic potential case, since the continuous-time nonlinear propagator no longer enjoys
explicit analytic treatments, we adopted a second order in time two-stage implicit Runge--Kutta (RK) nonlinear propagator together with an efficient Newton iterative solver.   We establish a sharp maximum principle which ensures phase separation. 
We prove a new modified energy dissipation law under very mild restrictions on the time step. 
The methods introduced in this work can be generalized to many other models including nonlocal Allen-Cahn models, Cahn--Hilliard equations, general  drift-diffusion systems and nonlinear parabolic
systems.

\section*{Acknowledgements}
 The research of C. Quan is supported by NSFC Grant 11901281, the Guangdong Basic and Applied Basic Research Foundation (2020A1515010336), and the Stable Support Plan Program of Shenzhen Natural Science Fund (Program Contract No. 20200925160747003).


\end{document}